\documentclass[10pt]{article}

\usepackage{amsmath}    

\usepackage[colorlinks=true,linkcolor=red,citecolor=blue]{hyperref}

\usepackage{amsthm}     
\usepackage{amssymb}    
\usepackage{braket}     
\usepackage{mathtools}
\usepackage{relsize}



\usepackage[all]{xy}

\newcommand{\what}{\widehat}

\newcommand{\abs}[1]{\left| #1 \right|}
\newcommand{\st}{\circledast}

\newcommand{\C}{\mathbb C}

\newcommand{\op}{\operatorname}
\newcommand{\mbf}{\mathbf}

\newcommand{\mc}{\mathcal}
\newcommand{\mf}{\mathfrak}
\newcommand{\bds}{\boldsymbol}

\renewcommand{\2}{\mathsmaller{(2)}}

\renewcommand{\ker}{\op{Ker}}
\newcommand{\im}{\op{Im}}
\newcommand{\tr}{\op{tr}}
\newcommand{\botimes}{\,\bar\otimes}

\DeclareMathOperator{\Aut}{Aut} 
\DeclareMathOperator{\End}{End}

\newtheorem{thm}{Theorem}[section]
\newtheorem{cor}[thm]{Corollary}
\newtheorem{lem}[thm]{Lemma}
\newtheorem{prop}[thm]{Proposition}

\theoremstyle{definition}
\newtheorem{defn}[thm]{Definition}
\newtheorem{ex}[thm]{Example}
\newtheorem{rem}[thm]{Remark}

\numberwithin{equation}{section}

\title{$L^{2}$-homology for inclusions of von Neumann algebras}
\author{Miguel Berm\'udez\thanks{\texttt{bermudez@math.jussieu.fr}}\medskip\\ 
{\small Institut de Math\'ematiques de Jussieu}\\ {\small Universit\'e Paris-Diderot, Paris} }
                                           
\begin{document}
\maketitle

\begin{abstract}
 In this paper we define $L^{2}$-homology and $L^{2}$-Betti numbers for a tracial $*$-algebra $A$ with respect to a von Neumann subalgebra $B$. When $B$ is reduced to the field of complex numbers we recover the $L^{2}$-Betti numbers of $A$ as defined by A. Connes and D. Shlyakhtenko in \cite{ConSh}, but we will show that taking into account the von Neumann subalgebra $B$ yields to a number of advantages like, for instance, a much better behavior with respect to compression and directed sums. 
  
 Our main result is that $L^{2}$-homology and $L^{2}$-Betti numbers of discrete measured groupoids and equivalence relations as defined by D. Gaboriau \cite{GabL2} and R. Sauer \cite{Sau} coincide with those of their convolution algebras. 
 
 We also define new invariants for inclusions of von Neumann algebras, which we call {\em residual $L^{2}$-Betti numbers}. We prove that the residual $L^{2}$-Betti numbers of a finite factor with respect to Cartan subalgebra coincide with the $L^{2}$-Betti numbers of the standard equivalence relation associated to the inclusion.
 \end{abstract}

\section{Introduction}
$L^{2}$-Betti numbers for von Neumann algebras and, more generally, for tracial $*$-algebras, has been introduced by A. Connes and D. Shlyakhtenko in \cite{ConSh} (see also the previous work of W. Paschke \cite{MR1407708}) and a quite important amount of work has been done on their study in recent years. Unfortunately, their main objective, that was distinguishing von Neumann algebras of free groups, is far from being reached. Actually, a continuous version of such invariants, introduced by A. Thom in \cite{MR2399103} has been shown to vanish by Thom himself for von Neumann algebras with a Cartan subalgebra, by V. Alekseev and D. Kyed \cite{Alekseev:2011aa} for property (T) von Neumann algebras  and more recently by V. Alekseev \cite{Alekseev:2013aa} for free group factors and by S. Popa and S. Vaes \cite{Popa:2014aa} for any $\rm II_{1}$-factor. In the purely algebraic setting, \cite{ConSh,MR2399103}, the only nontrivial result is that $L^{2}$-Betti numbers vanish for commutative \cite{ConSh} and more generally diffuse center von Neumann algebras \cite{MR2399103}.

In this paper we propose a generalization of such invariants for inclusions of tracial von Neumann algebras, what we call here {\em tracial extensions}. The idea, as in \cite{ConSh}, is to consider the relative Hochschild complex of the extension with values in some suitable bimodule, such that the corresponding homology groups are non trivial and had a module structure over some natural tracial von Neumann algebra with respect to which one can take a dimension. The first difficulty is then that, except in the central case (i.e. when $A$ is a $B$-algebra), there is no obvious such a module. One could be tempted to take the algebra $A$ itself; this results on the usual relative Hochschild homology groups, which are highly non trivial in many interesting examples. But the only von Neumann algebra acting on them in a natural way is the center of $A$, and one can prove that the corresponding $L^{2}$-Betti numbers are most of the time zero or infinite.

Let $A$ be a tracial $*$-algebra endowed with a von Neumann (i.e. weakly closed) sub-algebra $B\subset A$. We associate to every such a pair $A/B$ a new tracial $*$-algebra, noted $A*_{B}A$ and called the {\em fiber square of $A/B$}, formed by $A$-bimodular endomorphisms of $A\otimes_{B}A$. If $A\st_{B}A=W^{*}(A*_{B}A)$ denotes the corresponding von Neumann algebra, then the vector space 
\begin{equation}\label{}
A\hat\otimes_{B}A=(A\otimes_{B} A)\otimes_{A*_{B}A}(A\st_{B}A)
\end{equation}
has an obvious structure of $A\otimes A^{o}$-$A\st_{B}A$-bimodule; by functoriality, the relative Hochschild homology groups of $A/B$ with values in $A\hat\otimes_{B}A$, which will be called the {\em $L^{2}$-homology groups} of $A/B$ and noted $H_{\bullet}^{\2}(A/B)$, inherit a natural structure of $A\st_{B}A$-modules. The $L^{2}$-Betti numbers of $A/B$ are then defined by
\begin{equation*}
\beta^{(2)}_{\bullet}(A/B)=\dim_{A\st_{B}A}H^{(2)}_{\bullet}\left(A/B\right).
\end{equation*}
At first sight one might think that the natural choice of bimodule is the Connes-Sauvageot tensor product $L^{2}A\botimes_{B}L^{2}A$, which is also endowed with a natural $A$-bimodular action of $A\st_{B}A$. Unfortunately, a careful analysis shows that the corresponding Hochschild homology has infinite von Neumann dimension in most of the interesting examples. Hence $A\hat\otimes_{B}A$ appears to be the only natural $A$-bimodule yielding to non trivial relative $L^{2}$-invariants.

We shall see that the $L^{2}$-Betti numbers of the trivial extension $A/\C$ coincide with those of the tracial algebra $A$ as defined by Connes and Shlyakhtenko in \cite{ConSh}, but our point of view has a number of advantages even in this case. For instance, relative $L^{2}$-Betti numbers turn out to behave much better than Connes-Shlyakhtenko ones, since they transform linearly with respect to directed sums and compressions. Actually, the right definition of $L^{2}$-invariants of a tracial algebra $A$ should be that of the corresponding maximal central extension $A/Z(A)$. Of course, if $A$ is a factor, both definitions coincide.

One interesting feature of relative $L^{2}$-invariants of tracial extensions is that they give rise to a purely algebraic description of $L^{2}$-invariants of countable measure preserving standard equivalence relations and groupoids in the sense of Gaboriau \cite{GabL2} and Sauer \cite{Sau}. In order to precise this, let $\mc N_{A/B}$ be the normalizing algebra of the extension $A/B$, i.e. the vector span of the group $\mbf U_{A/B}$ of unitaries $u\in A$ such that $u^{*}Bu=B$. The {\em residual $L^{2}$-Betti numbers of $A/B$} are defined as the $L^{2}$-Betti numbers of $\mc N_{A/B}/B$, namely 
$$
\nabla^{(2)}_{\bullet}(A/B)=\beta^{(2)}_{\bullet}(\mc N_{A/B}/B).
$$ 
Now let us assume that $A$ is a finite factor and $B$ a Cartan subalgebra, i.e. a maximal abelian subalgebra with dense normalizing algebra. Since $B$ is abelian, there exists a probability standard space $X$ such that $B\simeq L^{\infty}X$ and the corresponding measure preserving action of $\mbf U_{A/B}$ on $X$ turns out to be countably generated, which implies that the orbit equivalence relation $\mc R_{A/B}$ is discrete.

\begin{thm}\label{thm:residual-L2-Betti-numbers}
If $A$ is a finite factor endowed with a Cartan subalgebra $B$, then the residual $L^{2}$-Betti numbers of $A/B$ coincide with the $L^{2}$-Betti numbers of $\mc R_{A/B}$ in the sense of Gaboriau \cite{GabL2}. Namely 
 \begin{equation*}
\nabla^{(2)}_{\bullet}\left(A/B \right)=\beta_{\bullet}^{(2)}(\mc R_{A/B})
\end{equation*}
\end{thm}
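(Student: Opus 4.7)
The plan is to reduce Theorem \ref{thm:residual-L2-Betti-numbers} to the main theorem of this paper (recalled in the abstract), which identifies the $L^{2}$-Betti numbers of a discrete measured equivalence relation with those of its convolution algebra. The bridge between the two statements is a Feldman--Moore identification of the normalizing algebra $\mc N_{A/B}$ with the (twisted) groupoid $*$-algebra of $\mc R = \mc R_{A/B}$ over $B \simeq L^{\infty}X$.

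First I would apply Feldman--Moore to the Cartan inclusion $B \subset A$: this produces a countable measure-preserving equivalence relation $\mc R$ on $(X, \mu)$ and a $2$-cocycle $\sigma \in Z^{2}(\mc R, \mbb T)$ with $A \simeq W^{*}(\mc R, \sigma)$, where $B$ corresponds to $L^{\infty}X$ acting diagonally. Every normalizing unitary $u \in \mbf U_{A/B}$ decomposes as a countable orthogonal sum $u = \sum_{n} v_{\phi_{n}}$, where the $\phi_{n}$ are full partial isomorphisms of $\mc R$ with domains partitioning $X$ and $v_{\phi_{n}}$ are the partial isometries implementing $\phi_{n}$ up to the cocycle. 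Taking $\C$-linear combinations, the normalizing algebra $\mc N_{A/B}$ is naturally identified with the twisted groupoid $*$-algebra of $\mc R$, with matching $B$-bimodule structure and trace.

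Then I would invoke the main theorem applied to this convolution algebra over $B$ to conclude $\beta^{(2)}_{n}(\mc N_{A/B}/B) = \beta^{(2)}_{n}(\mc R_{A/B})$, which is by definition $\nabla^{(2)}_{n}(A/B) = \beta^{(2)}_{n}(\mc R_{A/B})$. The main obstacle is the cocycle $\sigma$: Feldman--Moore in general produces a nontrivial twist, whereas the main theorem is most naturally formulated for the untwisted convolution algebra $\C[\mc R]$. To handle this, I would verify that neither the fiber square $\st_{B}$ nor the bimodule $\hat\otimes_{B}$ appearing in the definition of $H_{\bullet}^{\2}$ depends on $\sigma$: concretely, the iterated $B$-balanced tensor products $\C[\mc R, \sigma]^{\otimes_{B} k}$ should be isomorphic as $B$-bimodules to $\C[\mc R^{(k)}]$ independently of $\sigma$, and both the Hochschild differential and the $\st_{B}$-completion should factor through the underlying groupoid structure. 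This cocycle-invariance — the relative-$L^{2}$ analog of the classical fact that twisted group algebras share the $L^{2}$-Betti numbers of the underlying group — is where I expect the technical heart of the argument to lie.
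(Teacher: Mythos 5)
Your proposal follows essentially the same route as the paper: Feldman--Moore to realize $\mc N_{A/B}/B$ as $\C\mc R_{\sigma}/L^{\infty}X$, followed by the observation that the fiber square and the $L^{2}$-complex do not see the cocycle, which is precisely the paper's Theorem \ref{thm:square-fiber-twisted-algebra} (the map $T\mapsto T(\mbf 1\otimes\mbf 1)$ identifies $\C\mc R_{\sigma}*_{\bds L^{\infty}}\C\mc R_{\sigma}$ with $\C\mc R^{e}=\C\mc R$ independently of $\sigma$). The only cosmetic difference is that you invoke Theorem \ref{thm:L2=L2-groupoids} as a black box, whereas the paper exploits $\mc R=\mc R^{e}$ to get a direct isomorphism of the Hochschild $L^{2}$-complex with the geometric classifying complex of $\mc R$, but the decomposition and the key lemma are the same.
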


We will actually prove that the Hochschild $L^{2}$-complex of the tracial extension and the geometric $L^2$-complex of the equivalence relation $\mc R_{A/B}$ turn out to be isomorphic. This is extremely surprising, since they belong to two apparently very different categories of chain complexes. Indeed, the former is a chain complex of $A\st_{B}A$-modules, and the latter a chain complex of modules over the von Neumann algebra of $\mc R_{A/B}$. We shall prove that, in this particular case, these algebras are incidentally isomorphic.  

More generally, to every standard groupoid $\mc G$ over a standard probability space $X$ (also called a discrete measured groupoid in \cite{Sau}) we can naturally associate a tracial extension $\C\mc G/L^{\infty}X$ such that $L^{\infty}X$ is endowed with a structure of $\C\mc G$-module. This leads R. Sauer to define the $L^{2}$-Betti numbers of $\mc G$ as
$$
\beta^{(2)}_{\bullet}(\mc G):=\dim_{N\mc G}H^{(2)}_{\bullet}(\mc G)
$$
where $H^{(2)}_{\bullet}(\mc G):=\op{Tor}_{\bullet}^{\C\mc G}(L^{\infty}X,N\mc G)$ and $N\mc G=W^{*}(\C\mc G)$ is the enveloping von Neumann algebra of $\C\mc G$. For instance, every countable group can be viewed as a discrete measured groupoid over a single point and the numbers above coincide with the $L^{2}$-Betti numbers of countable groups if the sense of J. Cheeger and M. Gromov \cite{MR837621}. Further, Neshveyev and Rustad proved in \cite{NeshRus} that, if $\mc G$ is an equivalence relation, then both Sauer's and Gaboriau's definitions coincide. In \cite{MR2650795} R. Sauer and A. Thom study $L^{2}$-invariants for standard equivalence relations and groupoids using homological algebra approaches and they get some interesting applications to group theory and algebraic topology. In this paper we shall prove the following:
\begin{thm}\label{thm:L2=L2-groupoids}
If $\mc G$ is a discrete measured groupoid over a standard probability space $X$, then
\begin{equation*}
\beta^{(2)}_{\bullet}(\mc G)=\beta^{(2)}_{\bullet}(\C\mc G/L^{\infty}X)
\end{equation*}
\end{thm}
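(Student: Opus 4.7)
The overall strategy is to exhibit an explicit isomorphism of chain complexes between the relative Hochschild $L^{2}$-complex of $\C\mc G/L^{\infty}X$ and the bar complex computing Sauer's $L^{2}$-homology $\op{Tor}^{\C\mc G}_\bullet(L^{\infty}X,N\mc G)$ of the groupoid.

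As a first step, I would make all the algebras and bimodules explicit in terms of the groupoid data. Writing $A=\C\mc G$ and $B=L^{\infty}X$, the relative tensor product $A\otimes_B A$ is a version of the convolution algebra on composable pairs $\mc G^{(2)}=\{(g,h):s(g)=t(h)\}$, and a direct computation identifies the fiber square $A*_B A$ with its $B$-central subalgebra, namely the convolution algebra on \emph{bi-composable} pairs $\{(g,h):s(g)=t(h),\,t(g)=s(h)\}$. This generalizes both the group-algebra identification $\C G\otimes\C G^{op}$ and the Feldman--Moore identification $\C\mc R$ in the equivalence relation case. The von Neumann closure $A\st_B A$ is a natural groupoid analogue of $\mc L G\botimes\mc L G^{op}$.

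Next, I would invoke the Hopf algebroid structure of $\C\mc G$ over $L^{\infty}X$ (with coproduct $\Delta(\delta_g)=\delta_g\otimes_B\delta_g$ and antipode $S(\delta_g)=\delta_{g^{-1}}$) to establish an $A$-bimodule isomorphism between $A\hat\otimes_B A$ and the ``induced'' bimodule $A\otimes_B N\mc G$ built from the left $A$-module $N\mc G$. The key ingredient is the Galois map
\begin{equation*}
\beta : A\otimes_B A \;\xrightarrow{\sim}\; A\otimes_B A, \qquad \delta_g\otimes\delta_h \;\longmapsto\; \delta_{gh}\otimes\delta_h,
\end{equation*}
defined on composable pairs, which intertwines the standard $A$-bimodule structure with the induced one; combined with the completion $A\hat\otimes_B A=(A\otimes_B A)\otimes_{A*_B A}A\st_B A$ on the source and the extension of scalars to $N\mc G$ on the target, this yields the desired identification of bimodules. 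Substituting into the Hochschild bar complex $C^{(2)}_n(A/B)=A^{\otimes_B n}\otimes_B(A\hat\otimes_B A)$ and applying a Shapiro-type argument produces an isomorphism, at the level of complexes, with the bar resolution of $L^{\infty}X$ as a left $\C\mc G$-module, tensored over $\C\mc G$ with $N\mc G$. In particular $H^{(2)}_\bullet(A/B)\cong H^{(2)}_\bullet(\mc G)$ as modules.

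Finally, the dimension equality $\dim_{A\st_B A}H^{(2)}_\bullet(A/B)=\dim_{N\mc G}H^{(2)}_\bullet(\mc G)$ follows from a compatibility of L\"uck dimensions along the trace-preserving algebra embedding $N\mc G\hookrightarrow A\st_B A$, $\delta_g\mapsto\delta_{g^{-1}}\otimes\delta_g$: the module in question, viewed as an $A\st_B A$-module, is induced along this embedding from its underlying $N\mc G$-module, and such inductions preserve the L\"uck dimension, as in the argument of \cite{ConSh} for the case $B=\C$. The main obstacle in this plan is the careful matching of differentials under the Shapiro identification: the face maps of the Hochschild complex interleave the multiplication in $A$ with the $A$-bimodule action on the coefficients, and one must verify that the Galois twist converts the ``wraparound'' face map into the corresponding boundary of the one-sided bar resolution. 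A secondary difficulty is the measure-theoretic treatment of the tensor products and convolution algebras over the continuous base $L^{\infty}X$, which requires careful use of the module-theoretic framework for the relative tensor products $A^{\otimes_B n}$ and their associated von Neumann completions.
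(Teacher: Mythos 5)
Your plan is essentially the paper's proof: the convolution algebra on bi-composable pairs is exactly the paper's enveloping groupoid algebra $\C\mc G^{e}\simeq\C\mc G*_{L^{\infty}X}\C\mc G$, your Galois map is the building block of the paper's untwisting maps $\theta_{n}$ comparing the classifying-space complex of $\mc G$ with the acyclic Hochschild complex of $\C\mc G/L^{\infty}X$, and the concluding dimension count via induction along the trace-preserving diagonal embedding $N\mc G\hookrightarrow\C\mc G\st_{L^{\infty}X}\C\mc G$ is precisely how the paper finishes. One caveat: your intermediate claims that $A\hat\otimes_{B}A\cong A\otimes_{B}N\mc G$ and that $H^{(2)}_{\bullet}(A/B)\cong H^{(2)}_{\bullet}(\mc G)$ ``as modules'' cannot hold literally, since the two sides are modules over different von Neumann algebras and the paper stresses that the two complexes are \emph{not} isomorphic unless $\mc G$ is an equivalence relation; the correct statement, which your final paragraph in effect relies on, is $H^{(2)}_{\bullet}(A/B)\cong H^{(2)}_{\bullet}(\mc G)\otimes_{N\mc G}(A\st_{B}A)$.
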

Unlike the previous case, the geometric $L^{2}$-complex of $\mc G$ is no longer isomorphic to its Hochschild partner. Actually the von Neumann algebras $N\mc G$ and $N\mc G\st_{\bds L^{\infty}X}N\mc G$ shall be shown to be isomorphic if and only if $\mc G$ is a standard equivalence relation.

\section{Fiber products of tracial extensions}

Let $B$ be a von Neumann algebra. A \emph{hermitian right $B$-module} is a right $B$-module together with a compatible $B$-valued inner product, i.e. a sesquilinear form $(m,n)\in M\times M\mapsto m^{*}n\in B$ such that:
\begin{enumerate}
 \item $m^{*}(nx)=(m^{*}n)x$ for every $m,n\in M$ and $x\in B$;
 \item $(m^{*}n)^{*}=n^{*}m$ for every $m,n\in M$;
 \item $m^{*}m\geq 0$ for every $m\in M$;
 \item  $m^{*}m=0$ if and only if $m=0$.
\end{enumerate}
A hermitian left $B$-module is defined as a hermitian right module over the opposite algebra $B^{o}$. In that case we note $xm=mx^{o}$ and $m^{*}n=nm^{*}$. A hermitian $B$-bimodule is an algebraic $B$-bimodule together with a left and a right compatible $B$-valued inner products. The algebraic tensor product $M\otimes_{B} N$ of two hermitian $B$-bimodules $M$ and $N$ can be endowed with a structure of hermitian $B$-bimodule by setting\footnote{One can easily see that these sesquilinear forms are well defined on the algebraic tensor product $M\otimes_{B}N$, but it is not obvious that they are positive and non degenerate. A proof of this fact can be found in \cite[Proposition 4.5]{Lance}.} $(m\otimes n)^{*}(m'\otimes n')=(m^{*}m')(n'n^{*})$ and $(m'\otimes n')(m\otimes n)^{*}=(n'n^{*})(m^{*}m')$. If $M$ is a hermitian right module over a tracial von Neumann algebra $B$, then the formula $\braket{m|n}_{B}:=\tr_{B}(m^{*}n)$ defines a complex hermitian product on $M$. The action of $B$ in $M$ is bounded with respect to the corresponding norm and extends to the Hilbert completion $L^{2}M$, which can thus be viewed as a left $B$-module. But, in general, there is no compatible $B$-valued inner product on $L^{2}M$. If $M$ is a hermitian $B$-bimodule, then the trace on $B$ induce two eventually different complex hermitian products on $M$. If they coincide, then we say that $M$ is a \emph{trace-symmetric bimodule}. Observe that the tensor product of two $B$-bimodules $M$ and $N$ is an example of trace-symmetric bimodule. The Hilbert space $L^{2}(M\otimes_{B}N)$ coincides with the Connes-Sauvageot tensor product of the Hilbert bimodules $L^{2}M$ and $L^{2}N$ (cf. \cite[Appendix B.$\delta$]{MR1303779} and \cite{MR703809}). For further details on the theory of hermitian $C^{*}$-modules, the reader may consult \cite{Lance}.

A \emph{tracial $*$-algebra} is a unital $*$-algebra $A$ together with a normalized trace $\op{tr}_{A}$ verifying the following properties:
\begin{enumerate}
 \item Faithfulness : the sesquilinear form $\braket{a|b}=\op{tr}_{A}(a^{*}b)$ is a hermitian complex product on $A$;
 \item Boundedness : the left and right actions of $A$ over itself are bounded with respect to the pre-Hilbert norm, such that they extend into a structure of $A$-bimodule on the Hilbert completion $L^{2}A$, called the GNS representation of $A$;
\end{enumerate}
If the inclusion of $A$ in $\mf B(L^{2}A)$ induced by the GNS representation is weakly closed then $A$ is called a \emph{tracial von Neumann algebra}. To every tracial $*$-algebra $A$ corresponds a canonical von Neumann tracial algebra $W^{*}(A)$ given by the weak closure of $A$ in $\mf B(L^{2}A)$ endowed with the trace $\op{tr}_{W^{*}(A)}(a)=\braket{a(\mbf 1)|\mbf 1}$. Moreover one has $L^{2}W^{*}(A)=L^{2}A$. 

\begin{defn}
We shall call \emph{tracial extension} the pair $A/B$ formed by a tracial $*$-algebra $A$ together with a von Neumann subalgebra $B\subset A$. 
\end{defn}

In that case $L^{2}B$ is a closed subspace of $L^{2}A$, and one can prove that the orthogonal projection $L^{2}A\to L^{2}B$ induces a trace preserving conditional expectation $\mbf E:A\to B$, i.e. a completely positive $B$-bimodular map such that $\op{tr}_{A}=\op{tr}_{B}\circ \mbf E$. Let $\mbf U_{A/B}$ be the {\em normalizer} of $B$ in $A$ i.e. the set of unitaries $u\in A$ such that $u^{*}Bu=B$. Its vector span $\mc N_{A/B}$ is an involutive subalgebra of $A$ called the {\em normalizing algebra} of the extension. The algebra $\mc N_{A/\C}$ will be called the {\em residual algebra} of $A$. Since $B$ is a von Neumann algebra, it coincides with its residual algebra and is thus contained in $\mc N_{A/B}$. Since the conditional expectation is defined as the restriction to $A$ of the orthogonal projection on $L^{2}A\to L^{2}B$, then for every $a\in A$ and $u\in \mbf U_{A/B}$ we have $\mbf E(uau^{*})=u^{*}\mbf E(a)u$. 
Moreover $A$ can be regarded as a hermitian $B$-bimodule with respect to the inner products $(a,b)\mapsto \mbf E(a^{*}b)$ and $(a,b)\mapsto \mbf E(ba^{*})$. 

If we consider another tracial extension $C/B$, then the complex hermitian product on the trace-symmetric $B$-bimodule $A\otimes_{B}C$ is explicitly given by the following formulas:
\begin{align}\label{eq:inner-tensor}
\braket{a\otimes b|c\otimes d}
&= \op{tr}_{B}\left(\mbf E(db^{*})\,\mbf E(a^{*}c)\right) \\
&= \op{tr}_{C}\left(b^{*}\,\mbf E(a^{*}c)\,d\right) \nonumber \\
&= \op{tr}_{A}\left(c\,\mbf E(db^{*})\,a^{*}\right). \nonumber
\end{align}
For every element in the set 
\begin{equation}\label{eq:S-A/B}
\mbf S_{B}^{A,C}=\set{(u,v)\in \mbf U_{A/B}\times \mbf U_{C/B}|u^{*}xu=vxv^{*},\forall x\in B}
\end{equation}
we have a $A$-$C$-bimodular endomorphism of $A\otimes_{B}C$ defined on elementary tensors by the formula $(u *v) (a\otimes b)=a u\otimes vb$. We shall write $\mbf S_{A/B}$ instead of $\mbf S^{A,A}_{B}$. 

\begin{defn}
The \emph{algebraic fiber product} of two tracial extensions $A/B$ and $C/B$ is the algebra generated by the operators $u*v$, namely
\begin{equation}
A*_{B}C:= \Braket{u*v|(u,v)\in \mbf S_{B}^{A,C}}\subset \End_{A-C}\left(A\otimes_{B}C\right )
\end{equation}
which can naturally be viewed as a $*$-algebra since the involution $(u*v)^{*}=u^{*}*v^{*}$ extends to $A*_{B}C$. 
\end{defn}

\begin{prop}\label{prop:algebraic-fiber-product}
Let $A$ and $C$ be two tracial extensions of the same von Neumann algebra $B$. Then:
\begin{enumerate}
\item The vector $\mbf 1\otimes\mbf 1\in A\otimes_{B}C$ is separating and tracial for $A*_{B}C$;

\item The space of $B$-invariant vectors of $A\otimes_{B}C$ is a faithful $A*_{B}C$-submodule;

\item The algebra $A *_{B}C$ contains the center of $B$ through its action on $A\otimes_{B}C$ given by $\what x(a\otimes b)=ax\otimes b=a\otimes xb$;
  
\item If $B$ is commutative and the extensions $A/B$ and $C/B$ are central, then $A*_{B}C$ is isomorphic to the tensor product $\mc A^{o}\otimes_{B}\mc C$ where $\mc A$ and $\mc C$ denote the residual algebras of $A$ and $C$. 
\end{enumerate}
\end{prop}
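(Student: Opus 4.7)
My plan is to go through the four items in order; the only one requiring real ingenuity is the trace property in (i), while the rest is essentially bookkeeping on the bimodule structure and the defining relation $u^{*}xu=vxv^{*}$. A useful viewpoint throughout is that each $(u,v)\in\mbf S_{B}^{A,C}$ determines a common $*$-automorphism $\alpha=\alpha_{u,v}$ of $B$, characterized by $xu=u\alpha(x)$ and $vx=\alpha(x)v$ for $x\in B$, which is automatically $\tr_{B}$-preserving because it is inner.

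For (i), separating follows immediately from bimodularity: any $T\in A*_{B}C$ commutes with the left $A$- and right $C$-actions on $A\otimes_{B}C$, so $T(a\otimes c)=a\,T(\mbf 1\otimes \mbf 1)\,c$, and $T(\mbf 1\otimes \mbf 1)=0$ forces $T=0$ on the spanning set of elementary tensors. For the trace property I would compute on generators, using formula (2.3) to obtain
\begin{equation*}
\tau\bigl((u*v)(u'*v')\bigr)=\tr_{B}\bigl(\mbf E(v'^{*}v^{*})\,\mbf E(u^{*}u'^{*})\bigr),
\end{equation*}
together with the analogous expression for the product in the opposite order. After taking complex conjugates, the required equality reduces to
\begin{equation*}
\tr_{B}\bigl(\mbf E(u'u)\,\mbf E(vv')\bigr)=\tr_{B}\bigl(\mbf E(uu')\,\mbf E(v'v)\bigr),
\end{equation*}
which in turn follows from the key identities $\mbf E(u'u)=\alpha(\mbf E(uu'))$ and $\mbf E(vv')=\alpha(\mbf E(v'v))$. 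Each of these is obtained by testing against $x\in B$ and using, respectively, the cyclicity of $\tr_{A}$ together with $xu=u\alpha(x)$, and the cyclicity of $\tr_{C}$ together with $vx=\alpha(x)v$; the $\alpha$-invariance of $\tr_{B}$ then absorbs the automorphism and yields the required equality.

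Items (ii) and (iii) are formal consequences. For (ii), bimodularity of $T\in A*_{B}C$ gives $b(T\xi)=T(b\xi)=T(\xi b)=(T\xi)b$ whenever $\xi$ is $B$-invariant, so the $B$-invariant subspace is stable under $A*_{B}C$; faithfulness of the action on this subspace follows from (i), since $\mbf 1\otimes \mbf 1$ belongs to it and is already separating. For (iii), any unitary $x\in Z(B)$ trivially satisfies $(x,\mbf 1)\in\mbf S_{B}^{A,C}$, hence $\what x:=x*\mbf 1\in A*_{B}C$ implements the claimed action; extending by linearity using the unitary span of the von Neumann algebra $Z(B)$ gives a $*$-homomorphism $Z(B)\to A*_{B}C$, which is injective because $\what x(\mbf 1\otimes \mbf 1)=x\otimes \mbf 1$ has squared norm $\tr_{B}(x^{*}x)$.

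For (iv), the centrality hypothesis makes the relation $u^{*}xu=vxv^{*}$ tautological, so every unitary of $A$ (resp.\ $C$) normalizes $B$ and $\mbf S_{B}^{A,C}=\mbf U_{A}\times \mbf U_{C}$; in particular $\mc N_{A/B}=\mc A$ and $\mc N_{C/B}=\mc C$. I would then define
\begin{equation*}
\Phi\colon \mc A^{o}\otimes_{B}\mc C\longrightarrow A*_{B}C,\qquad a^{o}\otimes c\longmapsto a*c.
\end{equation*}
Well-definedness on $\otimes_{B}$ uses the centrality of $B$ in $\mc A$ and $\mc C$, which lets $b\in B$ slide across an elementary tensor in either direction, so that $(ba)*c$ and $a*(bc)$ define the same operator on $A\otimes_{B}C$. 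The calculation $(a*c)(a'*c')=(a'a)*(cc')$ shows $\Phi$ is an algebra homomorphism, and it hits the generators of $A*_{B}C$ by construction. For injectivity, evaluating at the separating vector $\mbf 1\otimes \mbf 1$ factors $\Phi$ through the natural map $\mc A^{o}\otimes_{B}\mc C\to A\otimes_{B}C$ sending $a^{o}\otimes c$ to $a\otimes c$, which agrees with $\mc A\otimes_{B}\mc C\to A\otimes_{B}C$ thanks to centrality; this map is a complex isometry with respect to the inner products of (2.3), hence injective by the non-degeneracy property (iv) of hermitian $B$-modules. Combined with the separating property of $\mbf 1\otimes\mbf 1$ from (i), this forces $\Phi$ to be a bijection.

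The main obstacle is the trace property in (i); the other assertions become essentially formal once one reads the $\mbf S$-relation as the statement that $u$ and $v$ induce the same inner automorphism of $B$, but traciality is the only place where this viewpoint is genuinely indispensable, since without it there is no natural identification relating $\mbf E(u'u)$ with $\mbf E(uu')$.
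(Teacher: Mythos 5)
Your proof is correct, and although it follows the paper's overall template (compute on the generators $u*v$ and exploit the separating vector $\mbf 1\otimes\mbf 1$), the execution differs at the two places that matter. For the trace property in (i), the paper verifies $\phi(ST)=\phi(TS)$ by a direct chain of manipulations inside $\tr_{A}$, repeatedly inserting $w^{*}w$ and $tt^{*}$ and commuting them past $\mbf E$; you instead isolate the common automorphism $\alpha_{u,v}$ of $B$ and reduce everything to the two identities $\mbf E(u'u)=\alpha(\mbf E(uu'))$ and $\mbf E(vv')=\alpha(\mbf E(v'v))$ together with $\alpha$-invariance of $\tr_{B}$. This is cleaner and makes the mechanism transparent; the one nit is that $\alpha$ is \emph{not} an inner automorphism of $B$ (it is implemented by a unitary of $A$, resp.\ of $C$), so its trace-invariance should be justified by $\tr_{B}=\tr_{A}|_{B}$ and traciality of $\tr_{A}$ rather than by innerness --- which is in fact exactly what your subsequent computation uses. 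For the separating property you argue algebraically from $A$-$C$-bimodularity via $T(a\otimes c)=a\,T(\mbf 1\otimes\mbf 1)\,c$, whereas the paper passes to $L^{2}(A\otimes_{B}C)$ and invokes cyclicity of $\mbf 1\otimes\mbf 1$ for $A\otimes C^{o}$; your version has the minor advantage of not needing the elements of $A*_{B}C$ to act boundedly. In (iv) the paper analyzes the image of $T\mapsto T(\mbf 1\otimes\mbf 1)$ and reads off the multiplication $(u\otimes v)(u'\otimes v')=u'u\otimes vv'$ there, while you construct the inverse map $a^{o}\otimes c\mapsto a*c$ directly and get injectivity by factoring through the isometric inclusion $\mc A^{o}\otimes_{B}\mc C\to A\otimes_{B}C$; this costs you an explicit well-definedness check over $\otimes_{B}$ (which you supply, and which the paper leaves implicit) but makes the isomorphism more concrete. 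Items (ii) and (iii) coincide with the paper's arguments.
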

\begin{proof}

(i) The vector $\mbf 1\otimes\mbf 1$ is obviously cyclic for the von Neumann algebra generated by  $A\otimes C^{o}$ on $L^{2}(A\otimes_{B} C)$, hence separating for its commutant, which contains by construction $A*_{B}C$. Let $\phi$ be the faithful state on $A*_{B}C$ associated to $\mbf 1\otimes\mbf 1$. It remains to prove that it is tracial, i.e. that $\phi(ST)=\phi(TS)$ for every $T,S\in A *_{B}C$. By linearity, one can assume that $S=u*v$ and $T=w*t$ for some $(u,v),(w,t)\in \mbf S_{B}^{A,C}$. In that case one has
\begin{align*}
\phi(TS) 
&=  \op{tr}_{A}(w^{*}u^{*}\mbf E(vt))
= \op{tr}_{A}(u^{*}\mbf E(vt)w^{*})\\
&= \op{tr}_{A}(u^{*}w^{*}w\mbf E(vt)w^{*}) 
= \op{tr}_{A}(u^{*}w^{*}t\mbf E(vt)t^{*}) \\
&= \op{tr}_{A}(u^{*}\mbf E(vt)w^{*}ww^{*}) 
= \op{tr}_{A}(u^{*}w^{*}w\mbf E(vt)w^{*}) & (w^{*}w\in Z(B))\\
&= \op{tr}_{A}(u^{*}w^{*}t\mbf E(vt)t^{*}) 
= \op{tr}_{A}(u^{*}w^{*}\mbf E(tvtt^{*})) \\
& = \op{tr}_{A}(u^{*}w^{*}\mbf E(tv)tt^{*})
= \op{tr}_{A}(u^{*}w^{*}tt^{*}\mbf E(tv)) & (tt^{*}\in Z(B))\\
&= \op{tr}_{A}(u^{*}w^{*}ww^{*}\mbf E(tv)) = \op{tr}_{A}(u^{*}w^{*}\mbf E(tv))=\phi(ST)
\end{align*}
which proves the assertion.

(ii) Let $(A\otimes_{B}C)^{B}$ be the set of $B$-invariant vectors of $A\otimes_{B}C$. For every $x\in B$, $T\in A*_{B}C$ and $\xi\in (A\otimes_{B}C)^{B}$ we have $xT(\xi)=T(x\xi)=T(\xi x)=T(\xi)x$. This implies that $(A\otimes_{B}C)^{B}$ is a $A *_{B}C$-invariant space. Furthermore, it is faithful since it obviously contains the separating vector $\mbf 1\otimes\mbf 1$.

(iii) By definition $\mbf U_{Z(B)}\subset \mbf U_{A/B}\cap \mbf U_{C/B}$. Furthermore, for every $x\in \mbf U_{Z(B)}$ one has $(x,\mbf 1),(\mbf 1,x)\in \mbf S_{B}^{A,C}$ and  $\what x=\mbf 1*x=x*\mbf 1$. Since $Z(B)$ is the vector span of its unitaries, the assertion follows.

(iv) Let us consider the linear map  
\begin{equation}\label{}
\phi:A*_{B} C\to A\otimes_{B}C\quad,\quad T\mapsto T(\mbf 1\otimes \mbf 1)
\end{equation}
which is injective by (i). In this case $\mbf S_{B}^{A,C}=\mbf U_{A}\times \mbf U_{C}$ and every $T\in A*_{B}C$ can therefore be written as a linear combination of operators $u*v$ with $u\in \mbf U_{A}$ and $v\in \mbf U_{C}$. Since $(u*v)(u'*v')=(u'u)*(vv')$ and $u*v(\mbf 1\otimes\mbf 1)=u\otimes v$, it follows that $\phi$ maps isomorphically $A*_{B}C$ onto $\mc A^{o}\otimes_{B}\mc C$.
\end{proof}

\subsection{Directed sums}
A {\em weighted family of tracial algebras} is a finite sequence $(A_{n},\alpha_{n})_{n}$, such that $A_{n}$ is a tracial $*$-algebra for every $n$ and $(\alpha_{n})_{n}$ is a partition of unity, i.e. $\alpha_{n}\in [0,1]$ for each $n$ and $\sum_{n}\alpha_{n}=1$. The directed sum algebra $\bigoplus_{n}A_{n}$, endowed with the trace $\tr_{A}=\sum_{n}\alpha_{n}\tr_{A_{n}}$, is a tracial algebra that will be noted $\sum_{n}\alpha_{n} A_{n}$ and called the {\em weighted directed sum} of the $A_{n}$'s.

\begin{prop}\label{prop:fiber-product-directed-sums}
Let $A_{n}/B_{n}$, $C_{n}/B_{n}$ be two finite sets of tracial extensions, and let $\alpha_{n}$ be a partition of unity. Then there exists a natural isomorphism of tracial algebras
\begin{equation}\label{}
\left(\sum_{n}\alpha_{n} A_{n}\right)*_{\sum_{n} \alpha_{n} B_{n}}\left(\sum_{n}\alpha_{n} C_{n}\right)\simeq \sum_{n}\alpha_{n}(A_{n}*_{B_{n}}C_{n})
\end{equation}
\end{prop}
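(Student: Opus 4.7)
The plan is to exploit the central projections $p_n\in B$ given by the units $\mbf 1_{B_n}$, which satisfy $\sum_n p_n=\mbf 1$ and decompose everything summandwise. Write $A=\sum_n\alpha_n A_n$, $B=\sum_n\alpha_n B_n$, $C=\sum_n\alpha_n C_n$, and note that each $p_n$ lies in $Z(B)$.

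First I would decompose the $B$-balanced tensor product. Since $p_n$ acts on $A$ as the projection onto $A_n$ and on $C$ as the projection onto $C_n$, any elementary tensor $a\otimes c$ with $a\in A_m$, $c\in C_k$ and $m\neq k$ satisfies $a\otimes c=ap_m\otimes c=a\otimes p_m c=0$. This yields a canonical decomposition of $A$-$C$-bimodules
\begin{equation*}
A\otimes_B C=\bigoplus_{n}A_n\otimes_{B_n}C_n.
\end{equation*}
Using formula (2.2), the decomposition $\mbf E=\bigoplus_n\mbf E_n$ of the conditional expectation, and $\tr_B=\sum_n\alpha_n\tr_{B_n}$, one checks that the complex inner product restricted to the $n$-th block equals $\alpha_n$ times the intrinsic inner product on $A_n\otimes_{B_n}C_n$.

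Next I would identify $\mbf S^{A,C}_B$ with $\prod_n\mbf S^{A_n,C_n}_{B_n}$: unitaries in $A$ have the form $u=\sum_n u_n$ with $u_n\in\mbf U_{A_n}$, and both the conditions $u^*Bu=B$ and $u^*xu=vxv^*$ split blockwise. For such $(u,v)$, the operator $u*v$ acts on $\bigoplus_n A_n\otimes_{B_n}C_n$ as $\bigoplus_n u_n*v_n$, defining a natural injective $*$-algebra map
\begin{equation*}
\Phi\colon\bigoplus_n(A_n*_{B_n}C_n)\longrightarrow A*_B C.
\end{equation*}
For surjectivity, any generator $u_n*v_n$ of $A_n*_{B_n}C_n$, viewed as an element supported only in the $n$-th summand, is realized as $\what{p_n}\cdot(u*v)$ after lifting to $u=u_n+\sum_{m\neq n}p_m$ and $v=v_n+\sum_{m\neq n}p_m$, which one checks are unitaries lying in $\mbf S^{A,C}_B$. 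The operator $u*v$ acts as $u_n*v_n$ on the $n$-th block and as the identity on the others, and by Proposition \ref{prop:algebraic-fiber-product}(iii) the projection $\what{p_n}\in A*_B C$ cuts off those other blocks.

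Finally I would verify trace compatibility. By Proposition \ref{prop:algebraic-fiber-product}(i), the trace on either side is computed via evaluation at the separating vector $\mbf 1\otimes\mbf 1=\bigoplus_n\mbf 1_n\otimes\mbf 1_n$ followed by the inner product. Combining this with the inner product decomposition from Step 1 yields $\phi(\Phi(\bigoplus_n T_n))=\sum_n\alpha_n\phi_n(T_n)$, matching exactly the trace of the weighted sum $\sum_n\alpha_n(A_n*_{B_n}C_n)$. The main obstacle is really just the careful bookkeeping of these weights: one must check that the conditional expectation, the trace on $B$, the separating vector, and the lifts of single-block unitaries all decompose consistently, but no new analytic ingredient is needed beyond Proposition \ref{prop:algebraic-fiber-product}.
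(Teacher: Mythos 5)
Your argument is correct and follows essentially the same route as the paper's proof: decompose $A\otimes_{B}C$ blockwise, identify $\mbf S_{B}^{A,C}$ with the product of the $\mbf S_{B_{n}}^{A_{n},C_{n}}$, and check that the trace picks up the weights $\alpha_{n}$. Your explicit surjectivity step (lifting $u_{n}$ to $u_{n}+\sum_{m\neq n}p_{m}$ and cutting with $\what{p_{n}}$ via Proposition \ref{prop:algebraic-fiber-product}(iii)) merely spells out what the paper asserts directly when it writes $A*_{B}C=\bigoplus_{n}(A_{n}*_{B_{n}}C_{n})$.
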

\begin{proof}
Let $A=\sum_{n}\alpha_{n}A_{n}$, $B=\sum_{n}\alpha_{n}B_{n}$ and $C=\sum_{n}\alpha_{n}C_{n}$. We have an obvious identification of $A$-$C$-bimodules between $A\otimes_{B}C$ and $\bigoplus_{n}(A_{n}\otimes_{B_{n}}C_{n})$. Moreover, every unitary element of $u\in \oplus_{n}A_{n}$ is a sum of unitaries $u_{n}\in A_{n}$, and $uxu^{*}=\sum_{n}u_{n}x_{n}u_{n}^{*}$ for every $x=\sum_{n}x_{n}\in B$. Hence that $u\in \mbf U_{A/B}$ if and only if $u_{n}\in \mbf U_{A_{n}/B_{n}}$ for each $n$. A similar argument shows that $(u,v)\in \mbf S_{B}^{A,C}$ if and only if $(u_{n},v_{n})\in \mbf S_{B_{n}}^{A_{n},C_{n}}$ for each $n$. It follows that $A*_{B}C=\bigoplus_{n}(A_{n}*_{B_{n}}C_{n})$. Furthermore, the conditional expectation of the tracial extension $A/B$ turns out to be the directed sum of the conditional expectations of $A_{n}/B_{n}$. Hence, for every $u=\sum_{n}u_{n}\in \mbf U_{A/B}$ and $v=\sum_{n}v_{n}\in \mbf U_{C/B}$ one gets 
\begin{multline}\label{}
\tr_{A*_{B}C}(u*v)=\tr_{A}(u^{*}\mbf E(v))\\= \sum_{n}\alpha_{n}\tr_{A_{n}}(u_{n}^{*}\mbf E(v_{n}))=\sum_{n}\alpha_{n}\tr_{A_{n}*_{B_{n}}C_{n}}(u_{n}*v_{n})
\end{multline}
which finishes the proof.
\end{proof}

Let us assume now that all the algebras $B_{n}$ are isomorphic to some commutative von Neumann algebra $B$. Then $A=\sum_{n}\alpha_{n}A_{n}$ can be viewed as a tracial extension of $B$ in a natural way, but in that case the conditional expectation of $A/B$ is no longer the directed sum but the weighted sum of the conditional expectations of $A_{n}/B_{n}$, namely
\begin{equation}\label{}
\mbf E\left(\sum_{n}a_{n}\right)=\sum_{n}\alpha_{n}\mbf E(a_{n}).
\end{equation}

\begin{prop}\label{prop:fiber-product-central-directed-sums}
 Let $A_{n}/B$ and $C_{n}/B$ be a finite sequence of central tracial extensions of the same commutative von Neumann algebra $B$. Then for every pair of partitions of unity $(\alpha_{n})$ and $(\beta_{n})$ we have an isomorphism of tracial algebras
 \begin{equation}\label{}
\left(\sum_{n}\alpha_{n}A_{n}\right)*_{B}\left(\sum_{n}\beta_{n}C_{n}\right)\simeq\sum_{i,j}\alpha_{i}\beta_{j}(A_{i}*_{B}C_{j})
\end{equation}
\end{prop}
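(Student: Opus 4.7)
The plan is to apply Proposition~\ref{prop:algebraic-fiber-product}(iv) at two different scales and to glue the results via distributivity of the $B$-tensor product over direct sums. Write $A=\sum_{n}\alpha_{n}A_{n}$ and $C=\sum_{n}\beta_{n}C_{n}$; since $B$ is embedded diagonally and lies in the center of every $A_{n}$ (resp.\ $C_{n}$), both extensions $A/B$ and $C/B$ are again central.

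First I would apply Proposition~\ref{prop:algebraic-fiber-product}(iv) to the full extensions to obtain a $*$-algebra isomorphism
$$A*_{B}C\simeq \mc A^{o}\otimes_{B}\mc C,$$
where $\mc A$ and $\mc C$ denote the residual algebras of $A$ and $C$. Since unitaries in a direct sum of $*$-algebras are precisely tuples of unitaries, one has $\mc A=\bigoplus_{i}\mc A_{i}$ and $\mc C=\bigoplus_{j}\mc C_{j}$ as $B$-sub-bimodule decompositions, the $B$-action being diagonal on each factor. Distributivity of the tensor product over direct sums then yields
$$\mc A^{o}\otimes_{B}\mc C=\bigoplus_{i,j}\mc A_{i}^{o}\otimes_{B}\mc C_{j},$$
and a second application of Proposition~\ref{prop:algebraic-fiber-product}(iv)---now to each pair $A_{i}/B$, $C_{j}/B$---identifies each summand with $A_{i}*_{B}C_{j}$, producing the desired $*$-algebra isomorphism.

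To finish, I would verify that the trace on $A*_{B}C$ restricts to $\alpha_{i}\beta_{j}\tr_{A_{i}*_{B}C_{j}}$ on the $(i,j)$-summand. By linearity and Proposition~\ref{prop:algebraic-fiber-product}(i), this reduces to evaluating $\phi(T)=\braket{T(\mbf 1\otimes\mbf 1)|\mbf 1\otimes\mbf 1}$ for $T\in A*_{B}C$ corresponding (through the Proposition~\ref{prop:algebraic-fiber-product}(iv) isomorphism) to an elementary tensor $a_{i}\otimes c_{j}$ with $a_{i}\in\mc A_{i}$ and $c_{j}\in\mc C_{j}$. Using formula~(\ref{eq:inner-tensor}) together with the weighted-sum description $\mbf E_{A/B}(a_{i}^{*})=\alpha_{i}\mbf E_{A_{i}/B}(a_{i}^{*})$ and $\mbf E_{C/B}(c_{j}^{*})=\beta_{j}\mbf E_{C_{j}/B}(c_{j}^{*})$, one reads off $\phi(T)=\alpha_{i}\beta_{j}\tr_{A_{i}*_{B}C_{j}}(T')$, where $T'$ is the operator in $A_{i}*_{B}C_{j}$ corresponding to $a_{i}\otimes c_{j}$. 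The main thing to watch is keeping the diagonal embedding of $B$ straight so that the cross summands $\mc A_{i}^{o}\otimes_{B}\mc C_{j}$ with $i\neq j$ do not collapse; centrality of each $A_{n}/B$ is exactly what makes the diagonal $B$-action compatible with the product decomposition, so these cross terms survive intact.
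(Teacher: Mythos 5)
Your proposal is correct and follows essentially the same route as the paper: both apply Proposition~\ref{prop:algebraic-fiber-product}(iv) at the level of the total extensions and again summand-by-summand, decompose the residual algebras as $\mc A=\bigoplus_{i}\mc A_{i}$, $\mc C=\bigoplus_{j}\mc C_{j}$, distribute the tensor product over the direct sums, and then match the traces via $\tr_{B}(\mbf E(a)\mbf E(c))=\sum_{i,j}\alpha_{i}\beta_{j}\tr_{B}(\mbf E(a_{i})\mbf E(c_{j}))$, which is exactly the weighted conditional-expectation computation you describe. No substantive difference.
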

\begin{proof}
Let us note $\mc A$, $\mc C$, $\mc A_{i}$ and $\mc C_{i}$ the residual algebras of $A$, $C$, $A_{i}$ and $C_{i}$ respectively. By item (iv) in proposition \ref{prop:algebraic-fiber-product}, we have $A*_{B}C=\mc A^{o}\otimes_{B}\mc C$ and $A_{i}*_{B}C_{j}=\mc A_{i}^{o}\otimes_{B}\mc C_{j}$. Since $\mc A=\oplus_{n}\mc A_{n}$ and $\mc C=\oplus_{n} \mc C_{n}$, it follows that $A*_{B}C\simeq \oplus_{i,j}A_{i}*_{B}C_{i}$. For every $a=\sum_{i}a_{i}\in \mc A$ and $c=\sum_{n}c_{n}\in \mc C$ we have 
\begin{align}\label{}
\tr_{\mc A^{o}\otimes_{B}\mc C}(a^{o} \otimes c)
	&=\tr_{B}(\mbf E(a)\mbf E(c))\\
	&=\sum_{ij}\alpha_{i}\beta_{j}\tr_{B}(\mbf E(a_{i})\mbf E(c_{j}))\\
	&=\sum_{ij}\alpha_{i}\beta_{j}\tr_{\mc A_{i}^{o}\otimes_{B}\mc C_{j}}(a_{i}^{o}\otimes c_{j})
\end{align}
which finishes the proof.
\end{proof}

\section{Relative $L^{2}$-invariants}

Let $A$ be a tracial von Neumann algebra. A \emph{Hilbert $A$-module} is a Hilbert space $H$ together with a weakly continuous $*$-representation $A\to \mf B(H)$. It is said to be \emph{finitely generated} if $H$ can be isometrically embedded as a submodule of $L^{2}A\otimes\C^{n}$ for some $n$. The commutant of $A$ in $L^{2}A\otimes\C^{n}$ is naturally isomorphic to the finite von Neumann algebra $\mbf M_{n}(A^{o})$ of square matrices with entries in the opposite algebra $A^{o}$. The (non normalized) trace of a matrix $a=(a_{ij})\in \mbf M_{n}(A^{o})$ is defined as $\op{Tr}(a)=\sum_{i=1}^{n}\op{tr}_{A}(a_{ii}^{o})=\sum_{i}^{n}\tr_{A^{o}}(a_{ii})$. For every isometric $A$-modular embedding $\theta:H\to L^{2}A\otimes \C^{n}$, the orthogonal projection onto $H$, i.e. the operator $\theta^{*}\theta$,  belongs to the commutant of $A$ in $L^{2}A\otimes\C^{n}$; the von Neumann dimension of $H$ is then defined as $\dim_{A}H=\op{Tr} \theta^{*}\theta$. Since two such projections are unitarily equivalent, they have the same trace; in particular the von Neumann dimension of a finitely generated Hilbert $A$-module does not depend on the embedding in $L^{2}A\otimes \C^{n}$. More generally, one can define the von Neumann dimension of any algebraic $A$-module. The idea, due to Luck (cf. \cite{LuckBook}), is the following: if $P$ is a finitely generated projective $A$-module, then $L^{2}A\otimes_{A}P$ is a finitely generated Hilbert $A$-module. The \emph{generalized von Neumann dimension} of an arbitrary $A$-module $M$ shall be defined by
\begin{equation}\label{}
\dim_{A}M:=\textstyle\sup_{P}\dim_{A}L^{2}A\otimes_{A}P
\end{equation}
where $P$ runs over all finitely generated projective submodules of $M$. A morphism of $A$-modules $\phi:M\to N$ shall be called a {\em dimension isomorphism} if the von Neumann dimension of its kernel and cokernel vanishes.

\bigskip
A \emph{presimplicial set} is a sequence of sets $\mc E_{\bullet}=\set{\mc E_{n}\vert n\geq 0}$ endowed with \emph{face maps} $\pi_{i}:\mc E_{n}\to \mc E_{n-1}$ for $n\geq 1$ and $i=0,\dots,n$, satisfying the following relations $\pi_{i}\pi_{j}=\pi_{j-1}\pi_{i}\quad\text{for}\quad i<j$ \footnote{A simplicial set can also be defined as a contravariant functor $\mc E$ from the small category of order preserving maps between non-empty finite ordinals (called the simplex category) to the category of sets, where $\mc E_{n}=\mc F(n+1)$ and $\pi_{i}:\mc E_{n}\to \mc E_{n-1}$ are the maps induced by the different order preserving inclusions from $n$ to $n+1$.}. If the sets $\mc E_{n}$ and their face maps belong to the category of abelian groups, the sequence can be turned into a positive chain complex $(\mc E_{\bullet},d_{\bullet})$ with boundary $d_{n}:\mc E_{n}\to \mc E_{n-1}$ given by
$d_{n}=\sum_{i=0}^{n}(-1)^{i}\pi_{i}$. The corresponding sequence of homology groups shall be noted by $H_{\bullet}(\mc E)$.

\medskip
Let $A/B$ be an extension of unital algebras and let us consider the following presimplicial spaces:
\begin{enumerate}
\item The \emph{bar complex} of  $A/B$ is the presimplicial $A$-bimodule given by the sequence $\mbf K_{n}(A/B)=A^{\otimes_{B}(n+2)}$ together with the face maps 
\begin{equation}\label{eq:bar-face-maps}
\mbf k_{n,i}(a_{0}\otimes\cdots\otimes a_{n+1}) =a_{0}\otimes\cdots\otimes a_{i}a_{i+1}\otimes\cdots\otimes a_{n+1}
\end{equation}
The corresponding chain complex $(\mbf K_{\bullet}(A/B),d)$ is called the \emph{bar complex} of $A/B$. If $\mu:A\otimes_{B}A\to A$ denotes the multiplication on $A$, then the corresponding augmented complex
\begin{equation}\label{bar-resolution}
0\to\mbf K_{\bullet}(A/B)\xrightarrow{d}\mbf K_{\bullet}(A/B)\xrightarrow{\mu} A\to 0
\end{equation}
is exact, i.e. it defines a resolution of $A$ in the category of $A$-bimodules. Given a $A$-bimodule $M$, the \emph{Hochschild $M$-valued complex of $A/B$} is the chain complex $C_{\bullet}(A/B:M)= M\otimes_{A\otimes A^{o}} \mbf K_{\bullet}(A/B)$. The corresponding homology is called the \emph{Hochschild homology of $A/B$ with values in $M$} and noted $H_{\bullet}(A/B:M)$. The $n$-chain space $C_{n}(A/B:M)$ is naturally isomorphic to the space of coinvariants of the $B$-bimodule $M\otimes_{B} A\otimes_{B}\overset{n}\dots\otimes_{B}A$; the Hochschild face maps are given by 
\begin{equation}\label{eq:Hochschild-face-maps}
\mbf h_{n,i}(a_{0}\otimes\dots\otimes a_{n})=
\begin{cases}
 a_{0}\otimes\cdots\otimes a_{i}a_{i+1}\otimes \dots\otimes a_{n}& i=0,\dots,n-1\\
 a_{n}a_{0}\otimes\cdots\otimes a_{n-1} & i=n
\end{cases}
\end{equation}
where we take $a_{0}\in M$ and $a_{1},\dots,a_{n}\in A$. In the particular case $M=A$ we simply write $C_{\bullet}(A/B)$ instead of $C_{\bullet}(A/B:A)$ and $HH_{\bullet}(A/B)$ instead of $H_{\bullet}(A/B:A)$.

\item  The {\em acyclic Hochschild complex} of $A/B$ is defined as the presimplicial $A*_{B}A$-module $\mbf Z_{\bullet}(A/B)=C_{\bullet}(A/B:A\otimes_{B}A)$. More precisely, we have $\mbf Z_{n}(A/B)=C_{n+1}(A/B)$ together with the face maps $\mbf z_{n,i}=\mbf h_{n+1,i+1}$. 
\end{enumerate}

\begin{prop}\label{prop:bar-classifying-acyclic}
 The bar and acyclic Hochschild complexes of any extension of unital algebras $A/B$ are acyclic and define resolutions of $A$ and $A_{B}=A/[B,A]$ respectively.
\end{prop}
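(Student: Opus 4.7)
The plan for both parts is the same: exhibit an explicit contracting homotopy built by inserting the unit $\mbf 1$ in the leftmost slot.

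For the bar complex, I would define $s_{-1}:A\to\mbf K_0(A/B)$ by $a\mapsto\mbf 1\otimes a$ and, for $n\ge 0$,
\[
s_n:\mbf K_n(A/B)\to\mbf K_{n+1}(A/B),\qquad s_n(a_0\otimes\cdots\otimes a_{n+1})=\mbf 1\otimes a_0\otimes\cdots\otimes a_{n+1}.
\]
These maps are well-defined on $B$-balanced tensor products because $\mbf 1$ is central. A standard telescoping computation then yields the homotopy identities $\mu s_{-1}=\mathrm{id}_A$, $s_{-1}\mu+d_1 s_0=\mathrm{id}_{\mbf K_0}$, and $d_{n+1}s_n+s_{n-1}d_n=\mathrm{id}_{\mbf K_n}$ for $n\ge 1$. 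Hence the augmented complex $\mbf K_\bullet(A/B)\to A\to 0$ is contractible as a complex of $\C$-vector spaces and therefore defines a resolution of $A$.

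For the acyclic Hochschild complex, the plan is to identify $\mbf Z_\bullet(A/B)$ with the $B$-coinvariants of the bar complex. Since $A\otimes_B A$ is the $A$-bimodule induced from the trivial $B$-bimodule $B$ (i.e.\ $A\otimes_B A\simeq A^{e}\otimes_{B^{e}}B$ as $A$-bimodules), the induction--coinvariants adjunction gives a natural isomorphism
\[
\mbf Z_n=(A\otimes_B A)\otimes_{A^e}\mbf K_n(A/B)\;\simeq\;\mbf K_n(A/B)/[B,\mbf K_n(A/B)],
\]
where $[B,M]$ denotes the span of $\{xm-mx:x\in B,\,m\in M\}$. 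Under this identification the augmentation becomes $\bar\mu:\mbf K_0(A/B)/[B,\mbf K_0(A/B)]\to A/[B,A]=A_B$ induced by multiplication, and the definitional boundary $\sum(-1)^{i}\mbf h_{n+1,i+1}$ coincides with the descent of the bar boundary. The key observation is that the contracting homotopy $s$ is $B$-bimodule equivariant: on the left $s_n(b\mbf k)=\mbf 1\otimes ba_0\otimes\cdots=b\otimes a_0\otimes\cdots=b\,s_n(\mbf k)$ by the balancing relation $\mbf 1\otimes b=b\otimes\mbf 1$ in $A\otimes_B A$, and the right $B$-action commutes with $s_n$ trivially since $s_n$ does not touch the rightmost factor; likewise $s_{-1}$ and $\mu$ are $B$-equivariant. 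Consequently $s$ descends to a contracting homotopy on the augmented complex $\mbf Z_\bullet(A/B)\to A_B\to 0$, which is therefore exact.

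The main obstacle is the translation between the two descriptions of $\mbf Z_\bullet(A/B)$: one has to verify that the $B$-coinvariant quotient of the bar complex, equipped with the bar differential, really coincides with $C_\bullet(A/B:A\otimes_B A)$ equipped with the Hochschild faces $\mbf z_{n,i}=\mbf h_{n+1,i+1}$. The delicate point is that the ``cyclic'' Hochschild face $\mbf h_{n+1,n+1}$, which wraps a factor from the end back to the beginning, must be shown to correspond, under the induced isomorphism $\mbf K_{n+1}(A/B)/[A,\mbf K_{n+1}(A/B)]\simeq\mbf K_n(A/B)/[B,\mbf K_n(A/B)]$, to the bar multiplication of the last two factors of $\mbf K_n(A/B)$; this requires carefully tracking the cyclic identification built into the $A^e$-coinvariants.
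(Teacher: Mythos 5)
Your argument for the bar complex is correct and coincides with the paper's: the contracting homotopy is insertion of the unit in the leftmost slot, and the telescoping identities are exactly as you state.

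For the acyclic Hochschild complex there is a genuine gap, located precisely at the point you flag as delicate. The identification of $\mbf Z_n(A/B)$ with the $B$-coinvariants $\mbf K_n(A/B)/[B,\mbf K_n(A/B)]$ is correct at the level of underlying spaces, but the boundary of $\mbf Z_\bullet(A/B)$ is \emph{not} the descent of the bar boundary under the obvious flattening identification. Indeed the faces of $\mbf Z_n$ are $\mbf z_{n,i}=\mbf h_{n+1,i+1}$ for $i=0,\dots,n$, i.e.\ the multiplications $a_1a_2,\dots,a_na_{n+1}$ together with the cyclic face $a_{n+1}a_0$, whereas the descended bar faces are the multiplications $a_0a_1,\dots,a_na_{n+1}$ and no cyclic face. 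As a result the descended homotopy $a_0\otimes\cdots\otimes a_{n+1}\mapsto\mbf 1\otimes a_0\otimes\cdots\otimes a_{n+1}$ fails to contract $\mbf Z_\bullet(A/B)$: the only face that could retract the inserted unit is $\mbf h_{n+2,0}$, multiplication of the first two slots, and that is exactly the face omitted from the acyclic complex, so the identity term never appears in $d\bar r+\bar r d$. The two presimplicial structures do become equal after conjugating by the cyclic rotation $a_0\otimes\cdots\otimes a_{n+1}\mapsto a_{n+1}\otimes a_0\otimes\cdots\otimes a_n$, which is well defined on the $B$-coinvariants; transporting the bar homotopy through this rotation turns ``insert $\mbf 1$ before $a_0$'' into ``insert $\mbf 1$ after $a_0$'', that is, into $s_n(a_0\otimes\cdots\otimes a_{n+1})=a_0\otimes\mbf 1\otimes a_1\otimes\cdots\otimes a_{n+1}$. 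This is the homotopy the paper writes down directly, and for it one checks immediately that $\mbf z_{n+1,0}s_n=\mathrm{id}$ and $\mbf z_{n+1,i}s_n=s_{n-1}\mbf z_{n,i-1}$ for $i\geq 1$, whence $ds+sd=\mathrm{id}$. So either carry out the rotation bookkeeping you allude to, or simply verify these identities for $s$ directly; as written, your descent contracts the coinvariant bar complex, which you have not yet identified with $\mbf Z_\bullet(A/B)$ as a chain complex.
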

\begin{proof}
One can easily check that $H_{0}(\mbf K(A/B))=A$ and $H_{0}(\mbf Z(A/B))=A_{B}$. If we set $\mbf K_{-1}(A/B)=A$ and $\mbf Z_{-1}(A/B)=A_{B}$, then we obtain contracting chain homotopies $r_{n}:\mbf K_{n}(A/B)\to \mbf K_{n+1}(A/B)$ and $s_{n}:\mbf Z_{n}(A/B)\to \mbf Z_{n+1}(A/B)$ by setting $r_{n}(a_{0}\otimes\cdots\otimes a_{n+1})=\mbf 1\otimes a_{0}\otimes\cdots \otimes a_{n+1}$ and  $s_{n}(a_{0}\otimes\cdots\otimes a_{n})=a_{0}\otimes\mbf 1\otimes a_{1}\otimes\cdots \otimes a_{n}$ for every $n\geq -1$. This finishes the proof.
\end{proof}

Following \cite{Hoch}, we can view the Hochschild homology of a ring extension as a sequence of derived functors. The main difficulty is that, unless $A$ is flat in the category of $B$-bimodules, the resolution \eqref{bar-resolution} is not flat in the category of $A$-bimodules. In \cite{Hoch} Hochschild solves the problem by introducing the notion of relative projective resolution. Given an extension of unital rings $R/S$ and a left $R$-module $M$, we say that $M$ is \emph{$(R/S)$-projective} if it is a direct $R$-module summand of $R\otimes_{S}K$ for some $S$-module $K$. If $M$ is a left $R$-module together with  a $(R/S)$-projective resolution $0\to P_{\bullet}\to P_{\bullet}\to M\to 0$, and $N$ is a right $R$-module, one can prove, using standard arguments in homological algebra, that the homology groups $\op{Tor}_{\bullet}^{R/S}(N,M):=H_{\bullet}(N\otimes_{R}P_{\bullet})$ does not depend on $P_{\bullet}$. Now it suffices to observe that the bar complex of a ring extension $A/B$ is projective as left module over the ring extension $A\otimes A^{o}/B\otimes A^{o}$. This implies the following:

\begin{thm}[\cite{Hoch}]
 Let $A/B$ be an extension of unital rings together with a $A$-bimodule $M$. If we write $R=A\otimes A^{o}$ and $S=B\otimes A^{o}$ then we have
\begin{equation*}
H_{\bullet}(A/B:M)=\op{Tor}^{R/S}_{\bullet}(M,A)
\end{equation*}
\end{thm}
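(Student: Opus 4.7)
The plan is to show that the bar complex $\mathbf{K}_\bullet(A/B)$ of Proposition~\ref{prop:bar-classifying-acyclic} is an $(R/S)$-projective resolution of $A$ as a left $R$-module, and then to invoke the relative comparison theorem of \cite{Hoch} to identify $\op{Tor}^{R/S}_\bullet(M,A)$ with the Hochschild homology $H_\bullet(A/B:M)$.

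The main step is verifying $(R/S)$-projectivity termwise. Let $K_n := A^{\otimes_B (n+1)}$, viewed as a left $S = B\otimes A^o$-module through left multiplication of $B$ on the first factor and right multiplication of $A$ on the last factor. Regarding $R$ as a right $S$-module via the inclusion $S\hookrightarrow R$, base change from $S$ to $R$ amounts to replacing the left $B$-action by a left $A$-action, so
\begin{equation*}
R\otimes_S K_n \;=\; (A\otimes A^o)\otimes_{B\otimes A^o} K_n \;\simeq\; A\otimes_B K_n \;=\; A^{\otimes_B(n+2)} \;=\; \mathbf{K}_n(A/B).
\end{equation*}
Thus $\mathbf{K}_n(A/B)$ is in fact $(R/S)$-free, and not merely a direct summand. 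It remains to observe that the contracting homotopy $r_n$ of Proposition~\ref{prop:bar-classifying-acyclic} is $S$-linear: right $A^o$-linearity is immediate from the formula, and left $B$-linearity follows from the identification $\mathbf{1}\otimes b a_0\otimes\cdots = b\otimes a_0\otimes\cdots$ in the tensor product over $B$. This is precisely what is needed to guarantee that the augmented bar complex splits as a sequence of $S$-modules, which is the extra requirement in Hochschild's definition of a relative resolution.

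The relative comparison theorem then implies that $\op{Tor}^{R/S}_\bullet(M,A)$ can be computed from the bar resolution. Since $M \otimes_R \mathbf{K}_n(A/B) = M\otimes_{A\otimes A^o} A^{\otimes_B (n+2)} = C_n(A/B:M)$ by definition of the Hochschild complex, we conclude
\begin{equation*}
\op{Tor}^{R/S}_\bullet(M, A) \;=\; H_\bullet\bigl(M \otimes_R \mathbf{K}_\bullet(A/B)\bigr) \;=\; H_\bullet(A/B:M).
\end{equation*}
The only genuine content is the relative projectivity verification in step two; the $S$-linearity of the contracting homotopy and the invocation of the comparison theorem are routine.
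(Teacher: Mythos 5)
Your proposal is correct and follows exactly the route the paper takes (and attributes to Hochschild): the paper's entire argument is the observation that the bar complex is $(R/S)$-projective, which you verify explicitly by exhibiting $\mathbf K_n(A/B)$ as the induced module $R\otimes_S A^{\otimes_B(n+1)}$ and checking that the contracting homotopy $r_n$ is $S$-linear. You have merely filled in the routine details the paper leaves implicit, so there is nothing to compare.
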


Let $A/B$ be a tracial extension, and let us consider the $A$-bimodule 
\begin{equation}\label{}
A\hat\otimes_{B}A=(A\otimes_{B}A)\otimes_{A*_{B}A}(A\st_{B}A)
\end{equation}
The \emph{Hochschild $L^{2}$-complex} of $A/B$ is defined as the $A\hat\otimes_{B}A$-valued Hochschild complex of $A/B$, $C^{\2}_{\bullet}(A/B):=C_{\bullet}(A/B:A\hat\otimes_{B}A)$ which is, by construction, a presimplicial left module over the von Neumann fiber square $A\st_{B}A$. The generalized von Neumann dimensions of the corresponding homology groups
\begin{equation}\label{eq:Betti-numbers}
\beta_{\bullet}^{(2)}(A/B)=\dim_{A\st_{B}A}H^{(2)}_{\bullet}(A/B)
\end{equation}
will be called the \emph{$L^{2}$-Betti numbers} of $A/B$. 

We can easily see that $L^{2}$-homology and Betti numbers of tracial algebras in the sense of \cite{ConSh} are a particular case of the above invariants. Indeed, every tracial algebra can be viewed as a tracial extension of the field of complex numbers $A/\C$. The corresponding algebraic fiber square is naturally isomorphic to $A^{o}\otimes A$, while the von Neumann fiber square $A\st_{\C}A$ corresponds to the enveloping von Neumann algebra $N\botimes N^{o}$ for $N=W^{*}(A)$. Hence $H^{\2}_{\bullet}(A/\C)=H_{\bullet}(A/\C:N\botimes N^{o})$, which is exactly the definition of the $L^{2}$-homology groups of $A$ given in \cite{ConSh}. 

\subsection{The compression formula}
Let $A$ be a finite factor together with a nonzero projection $p\in B'\cap A$. Then $B_{p}:=pBp$ is a von Neumann algebra with unit $p$ and $A_{p}:=pAp$, endowed with the normal trace $\tr_{A_{p}}(pap)=\tr_{A}(a)/\tr_{A}(p)$, is a von Neumann tracial extension of $B_{p}$.

\begin{thm}\label{thm:compression-formula}
Let $A/B$ and $p\in A\cap B'$ as above. Then we have 
 \begin{equation*}
\beta^{(2)}_{\bullet}(A_{p}/B_{p})=\frac{1}{\op{tr}_{B}\left[\mbf E( p )^{2}\right]}\;\beta^{(2)}_{\bullet}(A/B)
\end{equation*}
where $\mbf E:A\to B$ is the conditional expectation of  $A/B$.
\end{thm}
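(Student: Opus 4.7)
The plan is to realize $A_p/B_p$ together with its Hochschild $L^2$-complex as the compression of $A/B$ by a single orthogonal projection $P\in A\st_{B}A$, and then apply Lück's compression formula for generalized von Neumann dimension.

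First I would introduce the projection. Since $p\in A\cap B'$, the assignment $P(a\otimes b)=ap\otimes pb$ is well-defined on $A\otimes_{B}A$ (the $B$-linearity check uses $xp=px$ for $x\in B$), $A$-bimodular, and satisfies $P^{2}=P=P^{*}$. Hence $P$ lies in the $A$-bimodule commutant on $L^{2}(A\otimes_{B}A)$ and, after approximation by elements of $A*_{B}A$, inside the von Neumann fiber square $A\st_{B}A$. Using the vector-state formula $\op{tr}_{A\st_{B}A}(T)=\braket{T(\mbf 1\otimes\mbf 1)|\mbf 1\otimes\mbf 1}$ together with the inner product \eqref{eq:inner-tensor},
\[
\op{tr}_{A\st_{B}A}(P)=\braket{p\otimes p|\mbf 1\otimes \mbf 1}=\op{tr}_{B}\bigl(\mbf E(p)\,\mbf E(p)\bigr)=\op{tr}_{B}\bigl(\mbf E(p)^{2}\bigr).
\]

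The heart of the argument is then to establish two compatible identifications: an isomorphism of tracial von Neumann algebras $A_{p}\st_{B_{p}}A_{p}\cong P(A\st_{B}A)P$ (with the trace on the LHS matched up to the factor $1/\op{tr}(P)$) and an isomorphism of chain complexes $C^{\2}_{\bullet}(A_{p}/B_{p})\cong P\cdot C^{\2}_{\bullet}(A/B)$ of $P(A\st_{B}A)P$-modules. The chain isomorphism is induced by the natural inclusion of tensors $(pa_{0}p\otimes_{B_{p}}pa_{1}p)\otimes pa_{2}p\otimes\cdots\otimes pa_{n+1}p\hookrightarrow A\otimes_{B}(n+2)$ and its image lies in $P\cdot C^{\2}_{\bullet}(A/B)$. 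At the level of operators, the generating elements $u*v$ of $A_{p}*_{B_{p}}A_{p}$, with $(u,v)\in\mbf S_{B_{p}}^{A_{p},A_{p}}$, correspond to $A$-bimodular endomorphisms of $A\otimes_{B}A$ that preserve the image of $P$, and one verifies that $P(A*_{B}A)P$ is exactly generated by such operators. The key technical ingredient is that for elements of the compressed form $ap\otimes pb$ the excess $B$-relations (those that are not $B_{p}$-relations) are automatically satisfied, since $(1-p)xp=(1-p)px=0$ for $x\in B$; this lets the relative tensor over $B$ descend to the relative tensor over $B_{p}$ after compression.

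Finally, once both isomorphisms are in hand, I apply Lück's compression formula $\dim_{qNq}(qM)=\dim_{N}(M)/\op{tr}_{N}(q)$ with $N=A\st_{B}A$, $q=P$, and $M=H^{\2}_{\bullet}(A/B)$, to obtain
\[
\beta^{(2)}_{\bullet}(A_{p}/B_{p})=\dim_{P(A\st_{B}A)P}\bigl(P\cdot H^{\2}_{\bullet}(A/B)\bigr)=\frac{\dim_{A\st_{B}A}H^{\2}_{\bullet}(A/B)}{\op{tr}_{A\st_{B}A}(P)}=\frac{\beta^{(2)}_{\bullet}(A/B)}{\op{tr}_{B}\bigl(\mbf E(p)^{2}\bigr)}.
\]

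The main obstacle I expect is the chain-level identification: showing that $P\cdot C^{\2}_{\bullet}(A/B)$ really coincides (not just maps onto) with $C^{\2}_{\bullet}(A_{p}/B_{p})$, because a priori the $A$-relative tensors $A\otimes_{B}\cdots\otimes_{B}A$ are larger than their $A_{p}$-relative counterparts over $B_{p}$, and one must verify that the $P$-compression is precisely the mechanism that kills the extra $B$-relations. This step uses essentially both that $A$ is a factor (so that the conditional expectation $\mbf E_{p}:A_{p}\to B_{p}$ is the natural compression of $\mbf E$ and the trace normalizations line up) and that $p\in A\cap B'$ (so that the insertion of $p$ commutes with the $B$-action and defines a genuine $A$-bimodular projection).
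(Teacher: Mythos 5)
Your overall strategy --- the projection $P=p\ast p$ acting by $a\otimes b\mapsto ap\otimes pb$, the trace computation $\op{tr}_{A\st_{B}A}(P)=\op{tr}_{B}\!\left(\mbf E(p)^{2}\right)$ via \eqref{eq:inner-tensor}, the identification of $A_{p}\ast_{B_{p}}A_{p}$ with the compression $P(A\ast_{B}A)P$, and the final appeal to L\"uck's formula $\dim_{qNq}(qM)=\dim_{N}(M)/\op{tr}_{N}(q)$ --- is exactly the one used in the paper. However, the step you flag as the main obstacle, the chain-level isomorphism $C^{\2}_{\bullet}(A_{p}/B_{p})\cong P\cdot C^{\2}_{\bullet}(A/B)$, is genuinely false in degrees $n\geq 1$, and you have misdiagnosed the difficulty. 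The projection $P$ acts only on the coefficient bimodule $A\hat\otimes_{B}A$; the chain group $C^{\2}_{n}(A/B)$ is the space of coinvariants of $(A\hat\otimes_{B}A)\otimes_{B}A\otimes_{B}\cdots\otimes_{B}A$, and the $n$ middle tensor factors are untouched by $P$. Thus $P\cdot C^{\2}_{n}(A/B)$ still contains tensors whose middle entries range over all of $A$, whereas $C^{\2}_{n}(A_{p}/B_{p})$ only sees entries from $A_{p}$: your map on tensors $pa_{0}p\otimes\cdots\otimes pa_{n+1}p$ is injective but far from surjective. The problem is not ``extra $B$-relations'' killed by $P$, but extra $A$-material in the middle slots that $P$ cannot kill.

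The paper never claims a chain isomorphism; it uses Hochschild's relative homological algebra instead. Setting $R=A\otimes A^{o}$, $S=B\otimes A^{o}$ and $q=p\otimes p^{o}$, the compression functor $M\mapsto qM$ is exact and carries $(R/S)$-projectives to $(R_{q}/S_{q})$-projectives, so $q\mbf K_{\bullet}(A/B)$ is a relative projective resolution of $A_{p}$ --- not the bar resolution of $A_{p}/B_{p}$, but one computing the same relative $\op{Tor}$ groups, which is all that resolution-independence requires. This yields $H_{\bullet}(A_{p}/B_{p}:M_{p})\cong H_{\bullet}(A/B:M)$ at the level of homology only (here factoriality of $A$ enters, ensuring the compression by the full projection $q$ loses no homological information), and combined with the coefficient identification $A_{p}\hat\otimes_{B_{p}}A_{p}\cong (p\ast p)(A\hat\otimes_{B}A)_{p}$ one gets $H^{\2}_{\bullet}(A_{p}/B_{p})\cong (p\ast p)H^{\2}_{\bullet}(A/B)$, after which your final dimension count is correct. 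You should replace your chain-level claim by this resolution-independence argument. A minor remark: no weak approximation is needed to place $P$ in $A\st_{B}A$, since $u=2p-1$ is a unitary in $B'\cap A$, hence $(u,\mbf 1),(\mbf 1,u),(u,u)\in\mbf S_{A/B}$ and $p\ast p=\tfrac14(\mbf 1\ast\mbf 1+u\ast\mbf 1+\mbf 1\ast u+u\ast u)$ already lies in the algebraic fiber square.
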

\begin{proof}    
The first part of the proof is essentially the same as that of \cite[Theorem 4.2]{ConSh}. In our case we need to replace usual projective resolutions by relative projective resolutions. More precisely, let us consider the ring extension $R/S$, where $R=A\otimes A^{o}$ and $S=B\otimes A^{o}$, together with the projection $q=p\otimes p^{o}\in R$. The functor $M\to M_{p}:=qM$ from the category of $R$-modules to the category of $R_{q}$-modules (where $R_{q}=qRq$) is exact and maps $(R/S)$-projective modules to $(R_{q}/S_{q})$-projective modules. It follows that $q\mbf K_{\bullet}(A/B)=p\mbf K_{\bullet}(A/B)p$ is a $(R_{q}/S_{q})$-projective resolution of $qA=A_{p}$ and hence, for every $A$-bimodule $M$ we have
\begin{equation}\label{}
H_{\bullet}(A_{p}/B_{p}:M_{p})=H_{\bullet}(A/B:M).
\end{equation}

Now recall that $A\otimes_{B}A$ carries commuting $A$-bimodule and left $A\ast_{B}A$-module structures, which fit together into a $(A\ast_{B}A)\otimes A\otimes A^{o}$-module structure. The natural map $A_{p}\otimes A_{p}\to A\otimes A$ induces an isometric embedding of hermitian spaces $\theta:A_{p}\otimes_{B_{p}}A_{p}\to A\otimes_{B}A$ and can one easily verify that $A_{p}*_{B_{p}}A_{p}=\theta^{*}(A*_{B}A)\theta$. The corresponding orthogonal projection $\theta\theta^{*}$ is given on elementary tensors by the formula $\theta\theta^{*}(a\otimes b)= pap\otimes pap$. In other words, if $p* p$ denotes the projection of $A\ast_{B}A$ defined  by $(p\ast p)(a\otimes b)=ap\otimes pb$, then we have $\theta\theta^{*}=(p\ast p)\otimes p\otimes p^{o}$. Hence, the algebraic fiber square $A_{p}\ast_{B_{p}}A_{p}$ is spatially isomorphic to the compression of $A\ast_{B}A$ with respect to $p\ast p$, and we get an isomorphism of $A_{p}\ast_{B_{p}}A_{p}$-modules  between $A_{p}\hat\otimes_{B_{p}}A_{p}$ and $(p\ast p) (A\hat\otimes_{B}A)_{p}$. Therefore 
\begin{align*}\label{}
H_{\bullet}^{(2)}(A_{p}/B_{p})
	&=H_{\bullet}(A_{p}/B_{p}:(p*p)(A\hat\otimes_{B}A)_{p}) \\
	&\simeq (p* p)H_{\bullet}(A_{p}/B_{p}:(A\hat\otimes_{B}A)_{p})\\
	&\simeq (p*p)H_{\bullet}^{(2)}(A/B).
\end{align*}
 The result now follows from the well known formula $\dim_{W_{p}}pM=\tr_{W}(p)\dim_{W}M$ together with the identity $\op{tr}_{A*_{B}A}(p* p)=\op{tr}_{B}(\mbf E(p)^{2})$, which can easily be deduced from \eqref{eq:inner-tensor}.
\end{proof}

Note that this formula generalizes the compression formulas of Connes-Shlyakhtenko \cite[Theorem 2.4]{ConSh} and Gaboriau \cite[Th\'eor\`eme 5.3]{GabL2} (see also \cite[Theorem 1.1]{Sau} and \cite{MR2501006}). Indeed, assume that $p\in A$ is a projection with trace $\lambda$. If it belongs to the center of $B$, then $\mbf E(p)=p$ and we get a Gaboriau type compression formula
\begin{equation*}
\beta^{(2)}_{\bullet}\left(A_{p}/B_{p}\right)=\frac 1{\lambda}\;\beta^{(2)}_{\bullet}(A/B).
\end{equation*}
In the other side, if $B$ is reduced to the complex numbers, then any projection $p\in A$ verifies the hypothesis of the theorem and the conditional expectation is none other than the trace itself. Hence we obtain the Connes-Shlyakhtenko compression formula
\begin{equation*}
\beta^{(2)}_{\bullet}\left(A_{p}\right)=\frac{1}{\lambda^{2}}\;\beta^{(2)}_{\bullet}(A).
\end{equation*}

\begin{rem}
Theorem \ref{thm:compression-formula} can be easily generalized to the case of a general tracial extension $A/B$ (i.e. when only $B$ is a von Neumann algebra) by assuming $p\in Z(B)=B'\cap B$, and the formula probably holds for any $p\in B'\cap A$.
\end{rem}

\begin{cor}
 If $A$ has a 
\end{cor}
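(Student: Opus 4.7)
The plan is to derive vanishing (or forced infiniteness) of $\beta^{(2)}_{\bullet}(A/B)$ under the hypothesized condition on $A$, by exploiting the scaling given by Theorem \ref{thm:compression-formula} against a decomposition of $A$ into many small corners, in the spirit of the classical argument of \cite[Corollary 2.5]{ConSh}.

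First I would produce, for each integer $k\geq 1$, a family of pairwise orthogonal projections $p_{1},\ldots,p_{k}\in B'\cap A$ summing to $\mathbf 1$, each of trace $1/k$; the diffuseness hypothesis on $A$ (whether phrased in terms of a diffuse center, a diffuse abelian subalgebra of $B'\cap A$, or a diffuse subalgebra of $Z(B)$) is precisely what guarantees the existence of such families for every $k$. Applying Theorem \ref{thm:compression-formula} to each $p_{i}$ gives
$$
\beta^{(2)}_{n}(A_{p_{i}}/B_{p_{i}})=\frac{1}{\operatorname{tr}_{B}[\mathbf E(p_{i})^{2}]}\,\beta^{(2)}_{n}(A/B),
$$
and a Cauchy--Schwarz estimate for the conditional expectation $\mathbf E:A\to B$ provides the bound $\operatorname{tr}_{B}[\mathbf E(p_{i})^{2}]\leq \operatorname{tr}(p_{i})=1/k$.

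Second I would match this against a directed-sum decomposition of the Hochschild $L^{2}$-complex. Propositions \ref{prop:fiber-product-directed-sums} and \ref{prop:fiber-product-central-directed-sums} already show that the fiber square behaves linearly under weighted directed sums, and a parallel linearity should be provable for the bimodules $A\,\hat\otimes_{B}A$ and, by functoriality of Hochschild homology, for the groups $H^{(2)}_{\bullet}(A/B)$ themselves. Combined with the compatibility of the dimension function of $A\st_{B}A$ with the resulting block decomposition, this would yield
$$
\beta^{(2)}_{n}(A/B)=\sum_{i=1}^{k}\operatorname{tr}(p_{i})\,\beta^{(2)}_{n}(A_{p_{i}}/B_{p_{i}}).
$$
Inserting the compression formula and the bound $\operatorname{tr}_{B}[\mathbf E(p_{i})^{2}]\leq 1/k$ yields $\beta^{(2)}_{n}(A/B)\geq k\cdot \beta^{(2)}_{n}(A/B)$ for every $k\geq 1$, which forces $\beta^{(2)}_{n}(A/B)\in\{0,\infty\}$; taking $k\to\infty$ and using that the diffuse subalgebra admits projections of arbitrarily small trace should then rule out finite nonzero values.

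The principal obstacle I foresee is verifying additivity of $\beta^{(2)}_{n}$ under weighted directed sums of tracial extensions at the level of the Hochschild $L^{2}$-complex, i.e.\ tracking simultaneously the decompositions of $A\otimes_{B}A$, of the action of $A*_{B}A$, of its weak closure $A\st_{B}A$, and of the resulting $A\st_{B}A$-module dimension. A secondary but real technical point is that Theorem \ref{thm:compression-formula} was stated for factors with $p\in B'\cap A$, while decomposing by a family of orthogonal central projections destroys the factor condition; I would get around this by invoking the generalization to $p\in Z(B)$ announced in the remark following the theorem, and checking that the resulting compression formula is indeed compatible with the directed-sum identity above.
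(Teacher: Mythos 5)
The corollary you are trying to prove is, in the paper's source, an unfinished fragment: it reads ``If $A$ has a'' and stops, with no hypothesis, no conclusion, and no proof supplied afterwards. There is therefore no argument of the author's to measure yours against; everything in your proposal, starting with the guessed hypothesis (diffuse center, diffuse abelian subalgebra of $B'\cap A$, or diffuse subalgebra of $Z(B)$) and the guessed conclusion, is a reconstruction. Given the placement of the corollary immediately after Theorem \ref{thm:compression-formula}, a Connes--Shlyakhtenko-style scaling argument is indeed the plausible intent, so the skeleton you propose is reasonable.

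That said, your argument as written has a concrete gap. The identity $\beta^{(2)}_{n}(A/B)=\sum_{i}\operatorname{tr}(p_{i})\,\beta^{(2)}_{n}(A_{p_{i}}/B_{p_{i}})$ is only accessible through the paper's directed-sum results when both $A$ and $B$ genuinely decompose as weighted directed sums over the $p_{i}$, which forces $p_{i}\in Z(A)\cap B$; for projections merely in $B'\cap A$ (let alone in some diffuse abelian subalgebra of $B'\cap A$) the off-diagonal corners $p_{i}Ap_{j}$, $i\neq j$, are discarded and the identity fails. So the three hypotheses you treat as interchangeable are not: only diffuseness of $Z(A)\cap B$ makes both halves of your argument available at once. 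Moreover, in that case $\mathbf E(p_{i})=p_{i}$, so $\operatorname{tr}_{B}[\mathbf E(p_{i})^{2}]=1/k$ exactly and the Cauchy--Schwarz estimate is superfluous; conversely, in the situations where that estimate has content, the directed-sum step is unavailable. Finally, the relation $\beta\geq k\beta$ for all $k$ already yields the dichotomy $\beta^{(2)}_{n}(A/B)\in\{0,\infty\}$, and ``taking $k\to\infty$'' afterwards adds nothing; if the intended conclusion was actual vanishing, a separate argument (as in \cite{ConSh} for the diffuse commutative case) is still missing from your proposal.
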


\subsection{Directed sums}
$L^{2}$-Betti numbers of tracial $*$-algebras in the sense of Connes-Shlyakhtenko  have a strange quadratic behavior with respect to direct sums, which is explained in \cite{ConSh} by the fact that they use bimodules instead of modules. As we shall see here, the reason is mainly that they do not take into account the essential role of the underlying field. 

\begin{thm}
Let $A_{n}/B_{n}$ be a finite set of tracial extensions and $(\alpha_{n})$ a partition of unity. If $A=\sum_{n}\alpha_{n}A$ and $B=\sum_{n}\alpha_{n}B_{n}$ then we have and isomorphism of presimplicial $A*_{B}A$-modules $C_{\bullet}^{\2}(A/B)\simeq\bigoplus_{n}C^{\2}_{\bullet}(A_{n}/B_{n})$. In particular 
\begin{equation}\label{eq:iso-homology-directed-sums}
H^{(2)}_{\bullet}(A/B)\simeq \bigoplus_{n}H^{(2)}(A_{n}/B_{n})
\end{equation}
and 
\begin{equation}\label{}
\beta_{\bullet}^{(2)}(A/B)=\sum_{n}\alpha_{n}\beta_{\bullet}^{(2)}(A_{n}/B_{n})
\end{equation}
\end{thm}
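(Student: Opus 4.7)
The plan is to show that every ingredient of the $L^2$-Hochschild complex splits as a direct sum along the index $n$, compatibly with all relevant module structures, and then read off the homology and the Betti numbers. The key observation throughout is that the units $\mbf 1_{A_n}$ lie in $B_n\subset B$ and act as central orthogonal idempotents in $A$.

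First I would show that for every $k\ge 0$ the iterated relative tensor product splits as
\[
A^{\otimes_B(k+2)}\;\simeq\;\bigoplus_{n}A_n^{\otimes_{B_n}(k+2)}
\]
as $A$-bimodules. Indeed, across any $\otimes_B$ factor the projection $\mbf 1_{A_n}$ may be slid from one side to the other, which forces adjacent tensorands to belong to the same block $A_n$; the cross terms vanish because $\mbf 1_{A_n}\mbf 1_{A_m}=0$ for $n\ne m$. The bar face maps of \eqref{eq:bar-face-maps} respect this splitting, so $\mbf K_\bullet(A/B)\simeq\bigoplus_n\mbf K_\bullet(A_n/B_n)$ as presimplicial $A$-bimodules.

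Next, by Proposition \ref{prop:fiber-product-directed-sums} the algebraic fiber square splits as $A*_BA\simeq\sum_n\alpha_n(A_n*_{B_n}A_n)$, and passing to the weak closure gives $A\st_B A\simeq\bigoplus_n A_n\st_{B_n}A_n$ with trace $\sum_n\alpha_n\tr_{A_n\st_{B_n}A_n}$. Applying the $k=0$ case of the bar splitting to $A\otimes_B A$ and tensoring over $A*_BA$ with $A\st_B A$ — which is block diagonal and so commutes with the splitting — yields
\[
A\hat\otimes_B A\;=\;(A\otimes_B A)\otimes_{A*_BA}(A\st_B A)\;\simeq\;\bigoplus_{n}\,A_n\hat\otimes_{B_n}A_n
\]
as $A$-bimodule and simultaneously as left $A\st_B A$-module.

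Plugging both splittings into $C^{\2}_\bullet(A/B)=(A\hat\otimes_B A)\otimes_{A\otimes A^{o}}\mbf K_\bullet(A/B)$, the mixed terms $(A_i\hat\otimes_{B_i}A_i)\otimes_{A\otimes A^o}\mbf K_\bullet(A_j/B_j)$ with $i\ne j$ are annihilated by the projection $\mbf 1_{A_i}\otimes\mbf 1_{A_j}^{o}\in A\otimes A^{o}$, so only the diagonal terms survive and we obtain the claimed isomorphism $C^{\2}_\bullet(A/B)\simeq\bigoplus_n C^{\2}_\bullet(A_n/B_n)$ of presimplicial $A\st_B A$-modules. Passing to homology gives \eqref{eq:iso-homology-directed-sums}. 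The Betti number formula follows from the elementary dimension identity: for a block-diagonal module $M=\bigoplus_n M_n$ over a weighted direct sum $N=\bigoplus_n N_n$ of finite tracial von Neumann algebras with trace $\sum_n\alpha_n\tr_{N_n}$, the rescaling of the unnormalized trace gives $\dim_N M=\sum_n\alpha_n\dim_{N_n}M_n$, applied here to $N_n=A_n\st_{B_n}A_n$ and $M_n=H^{\2}_\bullet(A_n/B_n)$. The only real content is bookkeeping: checking that each splitting is equivariant for the $A$-bimodule, $A\otimes A^o$-module and $A\st_B A$-module structures simultaneously, which reduces to the centrality of the orthogonal family $\{\mbf 1_{A_n}\}\subset Z(A)\cap Z(B)$.
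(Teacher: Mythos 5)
Your argument is correct and follows essentially the same route as the paper: split the relative bar complex blockwise using the central orthogonal idempotents $\mbf 1_{A_{n}}\in B$, invoke Proposition \ref{prop:fiber-product-directed-sums} to split the fiber square and hence $A\hat\otimes_{B}A$, and conclude with the weighted trace identity $\dim_{N}M=\sum_{n}\alpha_{n}\dim_{N_{n}}M_{n}$. The paper's proof is merely terser, leaving the idempotent-sliding and the dimension bookkeeping implicit, so your write-up adds detail rather than a different idea.
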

\begin{proof}
We have an obvious identification of $A$-bimodules between $A^{\otimes_{B}k}$ and $\bigoplus_{n}A_{n}^{\otimes_{B_{n}}k}$, which yields to an isomorphism of presimplicial $A$-bimodules $\mbf K_{\bullet}(A/B)\simeq \bigoplus_{n}\mbf K_{\bullet}(A_{n}/B_{n})$. Since $A*_{B}A=\bigoplus_{n}(A_{n}*_{B_{n}}A_{n})$ by proposition \ref{prop:fiber-product-directed-sums}, it follows that $A\hat\otimes_{B}A=\bigoplus_{n}A_{n}\hat\otimes_{B_{n}}A_{n}$ and hence
\begin{equation}\label{}
C_{\bullet}^{(2)}(A/B)\simeq\bigoplus_{n}C^{(2)}_{\bullet}(A_{n}/B_{n})
\end{equation}
This finishes the proof. 
\end{proof}

In the case of central extensions of a fixed abelian von Neumann algebra, we recover the same unnatural quadratic additivity that in \cite{ConSh}.

\begin{prop}
 Let $B$ be a commutative von Neumann algebra and $A_{n}$ a finite set of central tracial extensions of $B$. If $(\alpha_{n})$ is a partition of unity and $A=\sum_{n}\alpha_{n}A_{n}$ then
\begin{equation}\label{}
H^{(2)}_{\bullet}(A/B)\simeq \bigoplus_{n}H^{(2)}(A_{n}/B)
\end{equation}
and
\begin{equation}\label{}
\beta_{\bullet}\left(A/B\right)=\sum_{n}\alpha_{n}^{2}\beta_{\bullet}^{(2)}(A_{n}/B)
\end{equation}
\end{prop}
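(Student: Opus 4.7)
The strategy I would follow is to decompose the coefficient bimodule along the central idempotents $e_{i}:=\mbf 1_{A_{i}}\in A$ and analyse each Hochschild summand separately. Since the $e_{i}$'s are central in $A=\bigoplus_{n}A_{n}$, Proposition \ref{prop:fiber-product-central-directed-sums} decomposes the algebraic fiber square as $A*_{B}A\simeq\sum_{ij}\alpha_{i}\alpha_{j}(A_{i}*_{B}A_{j})$; the same decomposition extends to the von Neumann completion $A\st_{B}A\simeq\bigoplus_{ij}(A_{i}\st_{B}A_{j})$, with the $(i,j)$-summand carrying the weighted trace $\alpha_{i}\alpha_{j}\tr$, and hence to the coefficient bimodule $A\hat\otimes_{B}A\simeq\bigoplus_{ij}M_{ij}$ with $M_{ij}:=A_{i}\hat\otimes_{B}A_{j}$. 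The Hochschild $L^{2}$-complex then splits accordingly, so that
\[
H^{\2}_{\bullet}(A/B)=\bigoplus_{ij}H_{\bullet}(A/B:M_{ij}),
\]
and it suffices to handle the diagonal and off-diagonal contributions separately.

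Next I would argue that $H_{\bullet}(A/B:M_{ij})=0$ whenever $i\neq j$. The bimodule $M_{ij}$ satisfies $e_{i}M_{ij}=M_{ij}=M_{ij}e_{j}$, so left multiplication by the central idempotent $e_{i}\in Z(A)$ acts as the identity on $M_{ij}$, while right multiplication by $e_{i}$ acts as zero (since $e_{i}e_{j}=0$). The key ingredient here is the classical fact that, for any central element $z\in Z(A)$, left and right multiplication by $z$ on the coefficient bimodule induce the same endomorphism of Hochschild homology; the explicit null-homotopy is given by alternately inserting $z$ into each of the tensor slots with signs, and relies only on the cyclic face map $\mbf h_{n,n}$, so it transfers verbatim from the absolute complex to the relative complex $C_{\bullet}(A/B:-)$. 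Applying this with $z=e_{i}$ then forces $\mathrm{id}=0$ on $H_{\bullet}(A/B:M_{ij})$, and the vanishing follows.

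For the diagonal terms I would identify $H_{\bullet}(A/B:M_{ii})$ with $H^{\2}_{\bullet}(A_{i}/B)$ by comparing relatively projective resolutions of $A_{i}$. Since $q_{i}:=e_{i}\otimes e_{i}^{o}$ is a central idempotent in $A\otimes A^{o}$, compressing the bar resolution $\mbf K_{\bullet}(A/B)\to A$ by $q_{i}$ yields a resolution of $A_{i}$ of the form $A_{i}\otimes_{B}A^{\otimes_{B}\bullet}\otimes_{B}A_{i}\to A_{i}$; this compressed complex inherits the free-end structure of the bar complex and is therefore relatively projective as a left module over $(A_{i}\otimes A_{i}^{o})/(B\otimes A_{i}^{o})$, just as the bar resolution $\mbf K_{\bullet}(A_{i}/B)$ is. Because the action of $A\otimes A^{o}$ on $M_{ii}$ factors through $A_{i}\otimes A_{i}^{o}$, tensoring either resolution with $M_{ii}$ over $A_{i}\otimes A_{i}^{o}$ computes the same relative Tor, namely $\op{Tor}^{(A_{i}\otimes A_{i}^{o})/(B\otimes A_{i}^{o})}_{\bullet}(M_{ii},A_{i})=H^{\2}_{\bullet}(A_{i}/B)$. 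Combined with the previous paragraph, this produces the isomorphism $H^{\2}_{\bullet}(A/B)\simeq\bigoplus_{i}H^{\2}_{\bullet}(A_{i}/B)$ advertised in the statement.

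Finally, for the Betti number formula, the weighted trace on $A\st_{B}A$ restricts to $\alpha_{i}^{2}\tr_{A_{i}\st_{B}A_{i}}$ on the $(i,i)$-diagonal summand, and since von Neumann dimension scales linearly with the trace we obtain $\dim_{A\st_{B}A}H^{\2}_{\bullet}(A_{i}/B)=\alpha_{i}^{2}\beta^{(2)}_{\bullet}(A_{i}/B)$; summing over $i$ yields the stated quadratic formula. The most delicate step in the plan is checking that the cyclic null-homotopy still trivialises the left/right action discrepancy in the relative Hochschild setting of $A/B$, and that the two relatively projective resolutions of $A_{i}$ used in the diagonal step genuinely compute the same relative Tor; both points appear to be routine but should be spelled out carefully.
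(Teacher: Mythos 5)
Your argument is correct and takes essentially the same route as the paper, whose proof simply invokes Proposition \ref{prop:fiber-product-central-directed-sums} and declares that the argument of \cite[Proposition 2.5]{ConSh} goes through verbatim once ordinary projective resolutions are replaced by relatively projective ones. Your write-up merely makes that adaptation explicit: the decomposition of $A\hat\otimes_{B}A$ along the central idempotents $e_{i}\otimes e_{j}^{o}$, the vanishing of the off-diagonal terms via the central-element null-homotopy (well defined over $B$ since $e_{i}$ commutes with $B$), the identification of the diagonal terms by comparing $(R_{i}/S_{i})$-projective resolutions, and the $\alpha_{i}^{2}$ rescaling of the trace on the $(i,i)$ corner of $A\st_{B}A$ are exactly the steps being imported.
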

\begin{proof}
Replacing $A$-projective resolutions by $(A/B)$-projective resolutions and using Proposition \ref{prop:fiber-product-central-directed-sums}, the proof follows verbatim that of \cite[Proposition 2.5]{ConSh}. 
\end{proof}

Observe that, if we set $\beta^{(2)}_{\bullet}(A)$ to be $\beta^{(2)}(A/Z(A))$ instead of $\beta^{(2)}_{\bullet}(A/\C)$, then we have
\begin{equation}\label{}
\beta_{\bullet}^{(2)}\left(\sum\alpha_{n}A_{n}\right)=\sum_{n}\alpha_{n}\beta^{(2)}_{\bullet}(A_{n})
\end{equation}

\section{$L^{2}$-invariants of standard groupoids}\label{section:groupoids}

\subsection{Standard spaces}\label{sec:standard-bundles}
Let us consider the category of standard Borel spaces together with countable-to-one Borel maps. 

\begin{thm}[Lusin's selection theorem]\label{thm:Lusin-selection}
 For every countable-to-one Borel map $\pi:X\to Y$ there exists a countable Borel partition $X=\coprod_{k}X_{k}$ such that $\pi|_{X_{k}}$ is injective.
\end{thm}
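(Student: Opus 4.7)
The plan is to invoke the classical Lusin--Novikov uniformization theorem. Consider the graph of $\pi$ as a subset of $Y \times X$,
\begin{equation*}
G = \{(y,x) \in Y \times X : \pi(x) = y\};
\end{equation*}
since $\pi$ is Borel, $G$ is Borel, and since $\pi$ is countable-to-one, every vertical section $G_y = \pi^{-1}(y)$ is countable. Lusin--Novikov then yields a decomposition $G = \coprod_{k \geq 0} G_k$ as a countable disjoint union of Borel sets, where each $G_k$ is the graph of a Borel partial function $f_k : D_k \to X$ defined on some Borel set $D_k \subset Y$.

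Setting $X_k = f_k(D_k) = \{x \in X : (\pi(x),x) \in G_k\}$ produces the required partition. Indeed, $X_k$ is Borel as the preimage of $G_k$ under the Borel map $x \mapsto (\pi(x),x)$; the $X_k$ are pairwise disjoint because the $G_k$ are; the family covers $X$ because for every $x \in X$ the pair $(\pi(x),x)$ lies in $G$ and hence in some $G_k$; finally, if $x, x' \in X_k$ satisfy $\pi(x) = \pi(x') = y$, then both $(y,x)$ and $(y,x')$ belong to the graph of $f_k$, forcing $x = f_k(y) = x'$.

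The entire difficulty is thus pushed into the Lusin--Novikov uniformization theorem, which is the actual obstacle. A self-contained proof would go as follows: fix a compatible Polish topology on $X$ with countable basis $\{U_n\}_{n \geq 0}$ and define, for $x \in X$,
\begin{equation*}
n(x) = \min\bigl\{n : U_n \cap \pi^{-1}(\pi(x)) = \{x\}\bigr\},
\end{equation*}
which is well defined since the countable fiber $\pi^{-1}(\pi(x))$ is separated by the basis. The sets $X_n = \{x : n(x) = n\}$ then partition $X$ and $\pi$ is injective on each by construction. The remaining technical point is to check that each $X_n$ is Borel; the condition defining $n(x)$ quantifies over the varying fiber and requires a fiberwise Cantor--Bendixson derivative argument to be reduced to a Borel operation, which is precisely the content of Lusin--Novikov. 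Granting that classical statement, the proof is complete.
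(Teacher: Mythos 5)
The paper offers no proof of this statement: it is quoted as a classical theorem of descriptive set theory (it is precisely the Lusin--Novikov theorem in its ``countable-to-one Borel maps are countable unions of Borel injections'' form), so your main argument --- passing to the graph $G\subset Y\times X$, noting its vertical sections are countable, invoking the uniformization form of Lusin--Novikov to write $G$ as a countable disjoint union of Borel partial graphs $G_k$, and pulling back along $x\mapsto(\pi(x),x)$ --- is the standard derivation and is correct as far as it goes. You and the paper ultimately rest on the same classical black box.

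The ``self-contained'' coda, however, contains a genuine error. The quantity $n(x)=\min\{n: U_n\cap\pi^{-1}(\pi(x))=\{x\}\}$ is \emph{not} well defined in general: such an $n$ exists only when $x$ is an isolated point of its own fiber, and a countable fiber can have non-isolated points (take a fiber homeomorphic to $\{0\}\cup\{1/m: m\geq 1\}$; no basic open set isolates $0$ inside it). So the difficulty is not, as you claim, merely the Borelness of the sets $X_n$; for some $x$ the set over which you minimize is empty and your partition does not cover $X$. The repair is the fiberwise Cantor--Bendixson iteration you allude to: first peel off the (Borel) set of points isolated in their fiber, then iterate transfinitely, using that countable Polish spaces are scattered so the derivative terminates. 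Since your primary argument explicitly grants Lusin--Novikov and does not lean on the coda, the proof as a whole stands, but the assertion that $n(x)$ is well defined should be deleted or corrected.
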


If $\pi:X\to Y$ is a countable-to-one Borel map, and $\mu_{X}$ and $\mu_{Y}$ are two Borel measures on $X$ and $Y$ respectively, then we say that $\pi$ is \emph{measure preserving} if for every Borel set $A\subset X$ we have
\begin{equation}\label{}
\mu_{X}(A)=\int_{Y}\abs{\pi^{-1}(x)} d\mu_{Y}(x)
\end{equation}
where $\abs{\cdot}$ denotes the counting measure. The measure $\mu_{X}$ is then completely determined by $\mu_{Y}$, and it follows from Lusin's selection theorem that, if $\pi$ is surjective, the measure $\mu_{Y}$ is determined by $\mu_{X}$. Two measure preserving countable-to-one maps are said to be equivalent if they coincide almost everywhere. We shall note $\op{\bf Stan}$ the category of standard Borel spaces endowed with $\sigma$-finite Borel measures, and where morphisms are equivalence classes of measure preserving standard maps. Recall that two nonatomic standard spaces are isomorphic if and only if they have the same mass, i.e. if the measure of their whole spaces are equal. In other words, every standard space $X$ is isomorphic to the interval $[0,\lambda[$ endowed with the Lebesgue measure, for some $\lambda\in ]0,+\infty]$. The objects of $\op{\bf Stan}$ shall be called \emph{standard spaces} and its morphisms \emph{standard maps}. For every $p\in [1,\infty]$, we shall write $L^{p}X$ for $L^{p}(X,\mu_{X})$ and we shall note $L^{0}X$ the $*$-algebra of equivalence classes of possibly unbounded complex valued measurable fonctions on $X$. It contains $L^{\infty}X$ as $*$-subalgebra and $L^{p}X$ as $L^{\infty}X$-submodule.

The pair $(U,\pi)$ formed by a standard space $U$ together with a standard map $\pi:U\to X$ will be called a \emph{standard bundle over $X$}. The bundle map $\pi$ induces an homomorphism of algebras $\pi^{*}:L^{\infty}X\to  L^{\infty}U$ and hence a structure of $L^{\infty}X$-module on $L^{p}U$ for every $p$. A standard bundle $(U,\pi)$ is said to be \emph{bounded} if the function 
\begin{equation*}
\abs{\pi^{-1}}:x\in X\mapsto \abs{\pi^{-1}(x)}
\end{equation*}
belongs to $L^{\infty}X$. Let $\C[U, \pi]$ be the vector space of all the functions $f\in L^{\infty}U$ with bounded support with respect to $\pi$, i.e. such that the restriction of $\pi$ to the support of $f$ defines a bounded bundle. It is obviously a $L^{\infty}X$-submodule of $L^{\infty}U$ and we have a natural $L^{\infty}X$-valued inner product given by
\begin{equation}\label{}
(f^{*}g)(x)=\sum_{\pi(u)=x}\overline {f(u)}g(u)
\end{equation}
and a $L^{\infty}X$-linear map $\C[\pi]:\C[U,\pi]\to L^{\infty}X$ given by 
\begin{equation}\label{}
\C[\pi](f)(x)=\sum_{\pi(u)=x}f(u)
\end{equation}

More generally we define a {\em standard multibundle} $(U,\mc B_{U})$ as a standard space $U$ endowed with a finite set $\mc B_{U}=\{\pi_{1},\dots,\pi_{n}\}$ of standard maps $\pi_{i}:U\to X$. Each $\pi\in \mc B_{U}$ induces $L^{\infty}X$-module structure on $L^{p}U$, and the corresponding multi-module\footnote{A multi-module over a commutative algebra $B$ is by definition a $B\otimes\overset{n}\dots\otimes B$-module.} is denoted by $L^{p}[U]$. It contains $\C[U]=\bigcap_{\pi\in \mc B_{U}} \C[U,\pi]$ as a sub-multi-module. A morphism of multi-bundles is a map $\phi:U\to V$ such that $\sigma\phi\in \mc B_{U}$ for every $\sigma\in \mc B_{V}$. The category of multi-bundles over $X$ together with their morphisms shall be noted $\op{\bf MBun}_{X}$. We shall note $\op{\bf MMod}_{L^{\infty}X}$ the category of multi-modules over $L^{\infty}X$.

\begin{lem}\label{lem:functor-multibundle-multimodule}
 Let $\phi:U\to V$ be a morphism of multi-bundles. Then every $f\in \C[U]$ has bounded support with respect to $\phi$ and $\C[\phi](f)\in \C[V]$. In particular, $\C[-]$ is a covariant functor from $\op{\bf MBun}_{X}$ to $\op{\bf MMod}_{L^{\infty}X}$
\end{lem}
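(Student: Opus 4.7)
The plan is to exploit the defining property $\sigma \circ \phi \in \mc B_U$ for every $\sigma \in \mc B_V$, which will transfer fiber-boundedness from each composition $\sigma \phi$ down to $\phi$ itself.

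First I would verify that any $f \in \C[U]$ has bounded support with respect to $\phi$. Pick some $\sigma \in \mc B_V$ and set $\pi := \sigma \phi \in \mc B_U$. Since $f \in \C[U] \subset \C[U,\pi]$, the function $x \mapsto |\pi^{-1}(x) \cap \op{supp}(f)|$ lies in $L^\infty X$. The elementary fiber inclusion $\phi^{-1}(v) \subset \pi^{-1}(\sigma(v))$ gives, almost everywhere on $V$,
\begin{equation*}
|\phi^{-1}(v) \cap \op{supp}(f)| \,\leq\, |\pi^{-1}(\sigma(v)) \cap \op{supp}(f)|,
\end{equation*}
so the left-hand side lies in $L^\infty V$, i.e., $\phi$ restricted to $\op{supp}(f)$ is a bounded bundle over $V$. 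Consequently $\C[\phi](f)(v) := \sum_{\phi(u) = v} f(u)$ is a finite sum for almost every $v$, defines an element of $L^\infty V$ supported in $\phi(\op{supp}(f))$, and satisfies $\|\C[\phi](f)\|_\infty \leq \|\pi|_{\op{supp}(f)}\|_\infty \cdot \|f\|_\infty$.

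Next I would show $\C[\phi](f) \in \C[V]$, i.e., that it has bounded support with respect to every $\sigma \in \mc B_V$. With $\pi = \sigma \phi$ as above,
\begin{equation*}
\sigma^{-1}(x) \cap \op{supp}(\C[\phi](f)) \,\subset\, \sigma^{-1}(x) \cap \phi(\op{supp}(f)) \,=\, \phi\bigl(\pi^{-1}(x) \cap \op{supp}(f)\bigr),
\end{equation*}
whose cardinality is bounded by $|\pi^{-1}(x) \cap \op{supp}(f)|$, essentially bounded on $X$. Since $\sigma \in \mc B_V$ was arbitrary, this proves $\C[\phi](f) \in \C[V]$.

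Functoriality is then routine: $\C[\op{id}_U]$ is the identity by construction, and for a composable pair $U \xrightarrow{\phi} V \xrightarrow{\psi} W$ and $f \in \C[U]$, swapping the order of summation yields
\begin{equation*}
\C[\psi \phi](f)(w) = \sum_{\psi\phi(u) = w} f(u) = \sum_{\psi(v) = w} \sum_{\phi(u) = v} f(u) = \C[\psi]\bigl(\C[\phi](f)\bigr)(w),
\end{equation*}
the rearrangement being valid because all the fiber sums are finite almost everywhere by the bounded-support property already established. Compatibility with the $L^\infty X$-multi-module structure reduces to factoring the scalar $h(\sigma(v))$ out of $\sum_{\phi(u) = v} f(u)\,h(\sigma\phi(u))$ for $\sigma \in \mc B_V$ and $h \in L^\infty X$. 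The only real point to monitor is that all fiber inclusions, cardinality bounds, and support inclusions need only hold modulo null sets, which is automatic in the $L^\infty$ setting; if desired, one may use Lusin's selection theorem (Theorem~\ref{thm:Lusin-selection}) to reduce to countable Borel partitions on which $\pi = \sigma\phi$, and hence $\phi$, are injective.
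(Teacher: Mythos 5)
Your argument is correct and is essentially the paper's own proof: both steps rest on the same two fiber inclusions, namely $\phi^{-1}(v)\cap\op{supp}(f)\subset(\sigma\phi)^{-1}(\sigma(v))\cap\op{supp}(f)$ for boundedness with respect to $\phi$, and $\sigma^{-1}(x)\cap\op{supp}(\C[\phi](f))\subset\phi\bigl((\sigma\phi)^{-1}(x)\cap\op{supp}(f)\bigr)$ for membership of the image in $\C[V]$. Your explicit verification of the composition law and the module compatibility is a harmless addition that the paper leaves to the reader.
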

\begin{proof}
 Let us note $S_{x}^{\pi}f=\{u\in U:\pi(u)=x,f(u)\neq 0\}$. Recall that $f\in \C[U]$ if and only if $\op{esssup}_{x}\# S_{x}^{\pi}f<+\infty$ for every $\pi\in \mc B_{U}$. Note that for every $\sigma\in \mc B_{V}$ and every $v\in V$ one has $S_{v}^{\phi}f\subset S_{\sigma(v)}^{\sigma\phi}f$ and hence
\begin{equation}\label{}
\op{esssup}_{v}\# S_{v}^{\phi}f\leq \op{esssup}_{x}\# S_{x}^{\sigma\phi}f<+\infty
\end{equation}
This shows that $f$ has bounded support with respect to $\phi$, such that we have a well defined function $\C[\phi](f)\in L^{\infty}V$. It remains to prove that $g=\C[\phi](f)$ has bounded support with respect to every $\sigma\in \mc B_{V}$. For this let us note that $S_{x}^{\sigma}g\subset \phi(S_{x}^{\sigma\phi}f)$. Hence 
\begin{equation}\label{}
\op{esssup}_{x}\# S_{v}^{\sigma}g\leq \op{esssup}_{x}\# S_{x}^{\sigma\phi}f<+\infty
\end{equation}
which finishes the proof. 
\end{proof}

\begin{prop}\label{prop:multi-bundle-decomposition}
 For every standard multi-bundle $U$ over $X$, there exists a sequence of multibundles $Y_{k}$ such that $Y_{k}\subset X$, $\mc B_{Y_{k}}\subset \Aut(X)$ and
 \begin{equation}\label{}
\C[U]\simeq \bigoplus_{k}\C[Y_{k}]
\end{equation} 
In particular, $\C[U]$ is projective with respect to all its $L^{\infty}X$-module structures.
\end{prop}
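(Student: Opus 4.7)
The plan is to decompose $U$ into countably many pieces on which every bundle map is injective, identify each piece with a Borel subset of $X$, and then refine further so that each transported bundle map becomes the restriction of an automorphism of $X$.

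First, I would apply Lusin's selection theorem (Theorem~\ref{thm:Lusin-selection}) iteratively to the finite list of bundle maps $\mc B_U = \{\pi_1,\dots,\pi_n\}$: refining with respect to $\pi_1$, then to $\pi_2$ within each piece, and so on, yields a countable Borel partition $U = \coprod_k U_k$ such that every $\pi_i|_{U_k}$ is injective. Since an injective Borel map between standard Borel spaces has Borel image and is an isomorphism onto it, the set $Y_k := \pi_1(U_k) \subset X$ is Borel and $\pi_1|_{U_k} : U_k \to Y_k$ is an isomorphism of standard measure spaces. Transporting the remaining maps along this isomorphism gives measure-preserving Borel injections $\tau_i^{(k)} := \pi_i|_{U_k} \circ (\pi_1|_{U_k})^{-1} : Y_k \to X$, endowing each $Y_k$ with a standard multibundle structure over $X$ whose first bundle map is the inclusion.

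The main step is then to refine each $Y_k$ further so that every $\tau_i^{(k)}$ is the restriction of some element of $\Aut(X)$. A measure-preserving Borel injection $\tau : Y \to X$ extends to an element of $\Aut(X)$ precisely when the complements $X \setminus Y$ and $X \setminus \tau(Y)$ are isomorphic as standard measure spaces. Splitting $X = X_d \sqcup X_a$ into its diffuse and purely atomic parts, any measure-preserving injection must respect this decomposition, since atoms have positive measure while diffuse spaces contain no countable subset of positive measure. Hence by partitioning $Y_k$ so that each sub-piece either lies in $X_d$ or is a single atom of $X_a$, the extension exists: on the diffuse part, complements of equal measure are isomorphic in $\op{\bf Stan}$, and on the atomic part the restriction of $\tau$ to a single atom extends by the transposition swapping its source and target (both of the same mass). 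Doing this simultaneously for $i = 2,\dots,n$ produces the desired refinement.

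Finally, the resulting decomposition $U \simeq \coprod_k Y_k$ in $\op{\bf MBun}_X$, combined with the functoriality provided by Lemma~\ref{lem:functor-multibundle-multimodule}, yields the isomorphism $\C[U] \simeq \bigoplus_k \C[Y_k]$ in $\op{\bf MMod}_{L^\infty X}$. For the projectivity assertion, observe that $\C[Y_k] = \chi_{Y_k} L^\infty X$ is a direct summand of $L^\infty X$ for the standard module structure inherited from the inclusion $Y_k \hookrightarrow X$, hence projective. For any other bundle map $\tau \in \mc B_{Y_k} \subset \Aut(X)$, the corresponding $L^\infty X$-module structure on $\C[Y_k]$ differs from the standard one by the algebra automorphism $\tau^*$ of $L^\infty X$, and twisting by an algebra automorphism preserves projectivity. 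The main obstacle in this plan is the refinement step, requiring the atomic-diffuse dichotomy outlined above to ensure every partial isomorphism extends to a full automorphism of $X$.
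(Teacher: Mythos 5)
Your proof is correct and follows essentially the same route as the paper's: apply Lusin's selection theorem to make all $\pi_{i}$ injective on each piece $U_{k}$, transport along $\pi_{1}$ to get $Y_{k}=\pi_{1}(U_{k})\subset X$, and complete the partial injections $\pi_{i}\pi_{1}^{-1}$ to automorphisms of $X$. You merely fill in two points the paper leaves implicit --- the extension-to-automorphism step (where your extra atomic/diffuse refinement is harmless but actually unnecessary, since a measure-preserving injection automatically forces the complements $X\setminus Y$ and $X\setminus\tau(Y)$ to have matching atomic mass distributions and diffuse mass, hence to be isomorphic) and the projectivity of $\C[Y_{k}]$ as a twisted direct summand of $L^{\infty}X$.
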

\begin{proof}
Let $\mc B_{U}=\{\pi_{1},\dots,\pi_{n}\}$. By Lusin's selection theorem we can write $U=\coprod_{k}U_{k}$ such that every $\pi_{i}$ is injective over each $U_{k}$, which implies that $\C[U]=\bigoplus_{k}\C[U_{k}]$. Let $Y_{k}=\pi_{1}(U_{k})\subset X$. The injective standard maps $\pi_{i}\pi_{1}^{-1}:Y_{k}\to X$ can be completed into automorphisms $\phi_{k,i}:X\to X$. A simple calculation shows that $\C[\pi_{1}]$ induces an isomorphism between $\C[U_{k}]$ and $\C[Y_{k}]$, where $\mc B_{Y_{k}}=\{\phi_{k,1},\dots,\phi_{k,n}\}$. 
\end{proof}

Let $U$ and $V$ be two standard multi-bundles over $X$ and let $\pi\in \mc B_{U}$ and $\sigma\in \mc B_{V}$. The {\em fiber product} of $U$ and $V$ with respect to $\pi$ and $\sigma$ is the standard space 
$$
U{_{\pi}\star_{\sigma}} V=\set{(u,v)\in U\times V| \pi(u)=\sigma(v)}
$$ 
endowed with the sequence of standard maps $\mc B_{U{_{\pi}\star_{\sigma}} V}=\{\bar\tau|\tau\in \mc B_{U}\cup\mc B_{V}\}$ defined by
\begin{equation*}
\bar\tau(u,v)=
\begin{cases}
 \tau(u) &\text{if $\tau\in \mc B_{U}$} \\
 \tau(v) &\text{if $\tau\in \mc B_{V}$}
\end{cases}
\end{equation*}
Note that, since $\bar\pi=\bar\sigma$, we have $\abs{\mc B_{U{_{\pi}\star_{\sigma}} V}}\leq \abs{\mc B_{U}}+\abs{\mc B_{V}}-1$. We define the {\em star product}
\begin{equation}\label{eq:star-product}
L^{0}U\times L^{0}V\to L^{0}(U\star V)\quad (f,g)\mapsto f\star g
\end{equation}
by the formula $(f\star g)(u,v)=f(u)g(v)$. If we note $\coprod$ the disjoint union, then we have 
\begin{equation}\label{eq:star-product-disjoint-union}
\left(\coprod_{i}U_{i}\right){_{\coprod\pi_{i}}\star_{\coprod\sigma_{i}}}\left(\coprod_{j} V_{j}\right)=\coprod_{i,j} (U_{i}){_{\pi_{i}}\star_{\sigma_{i}}}(V_{j})
\end{equation}

\begin{prop}\label{prop:star-vs-tensor}
Let $U$ and $V$ be two standard multi-bundles over $X$. For every $\pi\in\mc B_{U}$ and $\sigma\in \mc B_{V}$, the star product induces an isomorphism of multimodules  
 \begin{equation*}
\C [U]{_{\pi}\otimes_{\sigma}} \C [V]\to \C[U{_{\pi}\star_{\sigma}} V]
\end{equation*}
where ${_{\pi}\otimes_{\sigma}}$ denotes the tensor product with respect to the $L^{\infty}X$-module structures induced by $\pi$ and $\sigma$. Further, if $\phi:U\to U'$ and $\psi:V\to V'$ are morphisms of multibundles then $\phi\star\psi:(u,v)\mapsto (\phi(u),\psi(u))$ maps $U{_{\pi}\star_{\sigma}} V$ into $U'{_{\pi\phi}\star_{\sigma\phi}} V'$ and we have $\C[\phi \star\psi]=\C[\phi]\otimes\C[\psi]$
\end{prop}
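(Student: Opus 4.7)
The plan is to prove the proposition in three stages: well-definedness of the induced map, the isomorphism itself via reduction to a simple model, and the functoriality identity.

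First I would verify that the star product descends to a well-defined morphism of multimodules $\Phi:\C[U]\,{_{\pi}\otimes_{\sigma}}\,\C[V] \to \C[U{_{\pi}\star_{\sigma}} V]$. The essential bilinearity calculation is that for $x\in L^{\infty}X$, $f\in \C[U]$, $g\in \C[V]$ and $(u,v)\in U{_{\pi}\star_{\sigma}} V$, both $((f\cdot\pi^{*}x)\star g)(u,v)$ and $(f\star(\sigma^{*}x\cdot g))(u,v)$ equal $f(u)x(\pi(u))g(v)$, using the defining relation $\pi(u)=\sigma(v)$. Compatibility with the remaining multimodule structures follows from the identity $\bar\tau(u,v)=\tau(u)$ for $\tau\in \mc B_{U}$ and its counterpart on the $V$ side. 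The boundedness of supports is a direct consequence of Lemma \ref{lem:functor-multibundle-multimodule} applied to the projections $U{_{\pi}\star_{\sigma}} V \to U$ and $U{_{\pi}\star_{\sigma}} V\to V$, which are morphisms of multibundles.

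Next I would use Proposition \ref{prop:multi-bundle-decomposition} to reduce to a simple model. Decompose $\C[U]\simeq\bigoplus_{k}\C[Y_{k}]$ and $\C[V]\simeq\bigoplus_{l}\C[Z_{l}]$ with $Y_{k},Z_{l}\subset X$ and all structural maps in $\Aut(X)$. By \eqref{eq:star-product-disjoint-union} and distributivity of tensor products over direct sums, $\Phi$ decomposes accordingly, and it suffices to treat each summand. Via the isomorphism of multibundles $\pi:Y_{k}\to \pi(Y_{k})$ and its analogue for $Z_{l}$, which merely relabel the other structural maps by postcomposition with automorphisms of $X$, one may further assume $\pi$ and $\sigma$ are identity inclusions. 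The fiber product then becomes the ordinary intersection $Y_{k}\cap Z_{l}\subset X$, the tensor-product module structures are the tautological ones, and the star product is pointwise multiplication. The claim reduces to the classical identity $L^{\infty}Y_{k}\otimes_{L^{\infty}X}L^{\infty}Z_{l}\simeq L^{\infty}(Y_{k}\cap Z_{l})$, which follows from the fact that both sides identify naturally with the localization $\chi_{Y_{k}\cap Z_{l}}L^{\infty}X$.

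Finally, for the functoriality statement, I would verify $\C[\phi\star\psi](f\star g)=\C[\phi](f)\star\C[\psi](g)$ pointwise: evaluating the left-hand side at a target point $(u',v')$ amounts to summing $f(u)g(v)$ over pairs $(u,v)$ with $\phi(u)=u'$, $\psi(v)=v'$ and the fiber-product constraint, which factors as $\bigl(\sum_{\phi(u)=u'}f(u)\bigr)\bigl(\sum_{\psi(v)=v'}g(v)\bigr)$, matching the right-hand side. I do not anticipate any substantial obstacle: the measure-theoretic content is already packaged in Proposition \ref{prop:multi-bundle-decomposition} via Lusin's theorem, and what remains is commutative algebra together with careful bookkeeping of the various $L^{\infty}X$-module structures.
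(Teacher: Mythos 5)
Your argument is correct, but for the central claim (bijectivity of the map induced by the star product) it takes a genuinely different route from the paper. The paper splits the claim in two: injectivity is obtained by checking that the star product preserves the non-degenerate $L^{\infty}X$-valued inner products --- a computation that has the side benefit of recording that the identification is compatible with the hermitian structures, which is what relates $L^{2}$ of the tensor product to the Connes--Sauvageot product --- while surjectivity is obtained by applying Lusin's selection theorem directly to the support $W$ of a function $h\in\C[U{_{\pi}\star_{\sigma}}V]$, partitioning $W$ into finitely many pieces on which some $\bar\tau$ is injective, so that each piece is a graph and the restriction of $h$ to it is an elementary star product. You instead funnel everything through Proposition \ref{prop:multi-bundle-decomposition}: after reducing to subsets of $X$ with bundle maps in $\Aut(X)$, the statement becomes the idempotent identity $\chi_{Y}L^{\infty}X\otimes_{L^{\infty}X}\chi_{Z}L^{\infty}X\simeq\chi_{Y\cap Z}L^{\infty}X$, which yields injectivity and surjectivity in one stroke and is arguably cleaner than the paper's surjectivity step (whose assertion that $h|_{W_{\tau}}$ literally equals $h_{\tau}\star\mbf 1$ needs a further cutting down of the second factor); the price is that you lose the isometry statement. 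Two bookkeeping points you should make explicit: first, you need the decomposition of the target $\C[U{_{\pi}\star_{\sigma}}V]$ to be the one indexed by the pairs $(k,l)$, so you must re-run the proof of Proposition \ref{prop:multi-bundle-decomposition} on the Lusin partition $\coprod_{k,l}U_{k}{_{\pi}\star_{\sigma}}V_{l}$ (which is legitimate, since all bundle maps of the fiber product are injective on each such piece) rather than merely quote its statement; second, Lemma \ref{lem:functor-multibundle-multimodule} applied to the projections controls push-forwards of functions already living on $U{_{\pi}\star_{\sigma}}V$, not the boundedness of the support of $f\star g$ itself --- the bound you need is the elementary observation that a $\bar\tau$-fiber meeting the support of $f\star g$ injects into the product of a $\tau$-fiber of the support of $f$ with a $\sigma$-fiber of the support of $g$. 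Neither point is a real obstruction.
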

\begin{proof}
The only non trivial part of the statement is the bijectivity of the map induced by the star product. The injectivity follows from the fact that it preserves the $L^{\infty}X $-valued inner products, which are known to be non degenerated. Indeed, we have
\begin{align*}
[(f\otimes a)^{*}(g\otimes b)](x) &=(f^{*}g)(x)\cdot (a^{*}b)(x) \\ 
&=\left(\sum_{\pi(u)=x}\overline{f(u)}g(u)\right)\left(\sum_{\sigma(v)=x}\overline{a(v)}b(v)\right) \\
&=\sum_{\hat\pi(u,v)=x}\overline{f(u)a(v)}g(u)b(v) = (f\star a)^{*}(g\star b) 
\end{align*}
It remains to prove that every function $h$ on $U{_{\pi}\star_{\sigma}} V$ with bounded support with respect to $\mc B_{U{_{\pi}\star_{\sigma}}V}$ can be written as a finite sum of functions of the form $f\star g$, where $f$ and $g$ have bounded support with respect to $\mc B_{U}$ and $\mc B_{V}$ respectively. Let us note $W\subset U{_{\pi}\star_{\sigma}} V$ the support of $h$. Since $W$ is bounded with respect to all the bundle maps of $U{_{\pi}\star_{\sigma}}V$ there exists, by Lusin's selection theorem, a finite partition $\set{W_{\tau}:\tau\in \mc B_{U{_{\pi}\star_{\sigma}}V}}$ of $W$ such that $\bar\tau$ is injective on $W_{\tau}$ for every $\tau$. The restriction of $h$ to $W_{\tau}$ takes the form $h_{\tau}\star 1$ if $\tau\in \mc B_{U}$ and the form $\mbf 1\star h_{\tau}$ if $\tau\in \mc B_{V}$. It follows that $h$ is a sum of functions of the form $f\star g$, as wanted. 
\end{proof}

Let $U$ be a standard multi-bundle. For any pair of bundle maps $\pi,\sigma\in \mc B_{U}$ we note $U^{\pi\sigma}$ the biggest sub-multi-bundle of $U$ where $\pi$ and $\sigma$ coincide. More precisely we set $U^{\pi\sigma}=\{u\in \mc U|\pi(u)=\sigma(u)\}\subset U$ and $\mc B_{U^{\pi\sigma}}=\{\tau|_{U^{\pi\sigma}}:\tau\in \mc B_{U}\}$.

\begin{prop}\label{prop:multi-modules-fulness}
Let $U$ and $\pi,\sigma\in \mc B_{U}$ be as above. Then $\C[U^{\pi\sigma}]$ coincides with the space of invariant elements of $\C[U]$ with respect to the $L^{\infty}X$-bimodule structure induced by $\pi$ and $\sigma$. Furthermore, the natural map from invariants to coinvariants $\Psi:\C[U]^{L^{\infty}X}\to \C[U]_{L^{\infty}X}$ is an isomorphism of multi-modules.
\end{prop}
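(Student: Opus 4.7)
Two observations drive the proof. First, for any $f\in\C[U]$, the invariance condition $(g\circ\pi)f=f(g\circ\sigma)$ for all $g\in L^{\infty}X$ is equivalent, by specialization to indicator functions $g=\chi_{A}$ of arbitrary Borel sets $A\subset X$, to the condition $\pi(u)\in A\iff\sigma(u)\in A$ for a.e.\ $u$ in the support of $f$. Since Borel sets separate points of the standard space $X$, this is in turn equivalent to $\pi(u)=\sigma(u)$ a.e.\ on $\mathrm{supp}(f)$, i.e.\ to $f\in\C[U^{\pi\sigma}]$; the converse implication is immediate. This proves $\C[U]^{L^{\infty}X}=\C[U^{\pi\sigma}]$.

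For the second claim, write $\C[U]=\C[U^{\pi\sigma}]\oplus\C[U\setminus U^{\pi\sigma}]$ by restriction of supports, and let $J\subset\C[U]$ denote the commutator subspace spanned by $g\cdot m-m\cdot g$ with $g\in L^{\infty}X$ and $m\in\C[U]$. Commutators vanish pointwise on $U^{\pi\sigma}$, so $J\subset\C[U\setminus U^{\pi\sigma}]$; it therefore suffices to prove the reverse inclusion, which will force $\C[U]_{L^{\infty}X}=\C[U^{\pi\sigma}]=\C[U]^{L^{\infty}X}$ with $\Psi$ the identity. By Proposition~\ref{prop:multi-bundle-decomposition} I reduce to the case $U\subset X$ with $\mc B_{U}\subset\Aut(X)$, so that $\pi,\sigma$ are restrictions of Borel automorphisms and $\phi=\sigma\circ\pi^{-1}$ is a fixed-point-free Borel automorphism on $\pi(U\setminus U^{\pi\sigma})$.

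Apply the Kechris--Solecki--Todorcevic theorem on the Borel chromatic number of graphs generated by fixed-point-free Borel functions to obtain a Borel partition $X=M_{1}\sqcup M_{2}\sqcup M_{3}$ with $M_{i}\cap\phi(M_{i})=\emptyset$ on the fixed-point-free locus. For $f\in\C[U\setminus U^{\pi\sigma}]$ set $f_{i}=(\chi_{M_{i}}\circ\pi)\,f$; then on $\mathrm{supp}(f_{i})$ one has $\pi\in M_{i}$ and $\sigma=\phi\circ\pi\notin M_{i}$, so $\chi_{M_{i}}\cdot f_{i}=f_{i}$ while $f_{i}\cdot\chi_{M_{i}}=0$, producing the \emph{finite} commutator expression
\[
f=\sum_{i=1}^{3}\bigl(\chi_{M_{i}}\cdot f_{i}-f_{i}\cdot\chi_{M_{i}}\bigr)\in J.
\]
Finally, the remaining bundle maps in $\mc B_{U}$ act by pointwise multiplication and hence commute with both the $\pi$- and $\sigma$-actions, so they preserve $\C[U]^{L^{\infty}X}$ and $J$ separately; this makes $\Psi$ an isomorphism of multi-modules.

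The main obstacle is extracting a finite, rather than merely countable, decomposition of $f$ into commutators: a naive covering of $X\times X\setminus\Delta$ by disjoint rectangles yields only a countable sum, which does not lie in the algebraic span $J$. The finite Borel chromatic number of the graph of $\phi$ is precisely what is needed to squeeze everything into three commutator summands.
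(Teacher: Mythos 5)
Your proof is correct, and its skeleton (characterize invariants by a support condition, reduce to $U\subset X$ with $\mc B_{U}\subset\Aut(X)$ via Proposition~\ref{prop:multi-bundle-decomposition}, then exhibit $f-f|_{U^{\pi\sigma}}$ as an element of the commutator subspace) matches the paper's. The difference is in how the commutator step is executed. The paper produces a \emph{single} commutator: it picks $a\in L^{\infty}X$ such that $b=a\pi-a\sigma$ is invertible off $U^{\pi\sigma}$ and writes $f=f|_{U^{\pi\sigma}}+[a,h]$ with $h=b^{-1}f|_{U-U^{\pi\sigma}}$ --- but it asserts the existence of such an $a$ without justification, and this is precisely the nontrivial point: one needs $|a(x)-a(\phi(x))|$ bounded below on the fixed-point-free locus of $\phi=\sigma\pi^{-1}$. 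Your three-commutator decomposition via the Kechris--Solecki--Todorcevic $3$-coloring supplies exactly the missing ingredient (indeed, setting $a=\sum_{i}c_{i}\chi_{M_{i}}$ with distinct $c_{i}$ recovers the paper's single $a$), and your closing remark correctly identifies why a finite rather than countable decomposition is essential, since $J$ is only the algebraic span of commutators. Two small points to tidy up: state explicitly that $\phi$ is globally defined because $\pi,\sigma$ extend to elements of $\Aut(X)$, and note that since everything is taken modulo null sets a \emph{measurable} $3$-coloring suffices (obtainable either from KST or from a Rokhlin-type argument on the aperiodic part). Otherwise the argument is complete and, if anything, more rigorous than the paper's at the decisive step.
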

\begin{proof}
 By proposition \ref{prop:multi-bundle-decomposition} it suffices to prove the result for $U\subset X$ and $\mc B_{U}\subset \Aut(X)$. In that case $f\in \C[U]^{\bds L^{\infty}}$ if and only if $(a\pi-a\sigma)f=0$ for every $a\in L^{\infty}X$, which is equivalent to say that the support of $f$ is contained in $U^{\pi\sigma}$. This proves the first assertion. For the second one, we shall consider $a\in L^{\infty}X$ such that $b=a\pi-a\sigma$ is invertible in $L^{\infty}X$. Hence for every $f\in L^{\infty}Y$ we have $f=f|_{U^{\pi\sigma}}+[a,h]$ where $h=b^{-1}f|_{U-U^{\pi\sigma}}$. 
\end{proof}

\subsection{Standard groupoids}
Let $X$ be a standard probability space. A \emph{standard groupoid} over $X$, also known as a discrete measured groupoid (cf. \cite{Sau}), is given by a standard space $\mc G$ endowed with two standard maps $s,t:\mc G\to X$, called the source and the target respectively, an involutive isomorphism of standard spaces $\alpha\in\mc G\mapsto \alpha^{-1}\in \mc G$ called the inversion, and a standard map $\mc G{_{s}\star_{t}}\mc G\to\mc G$, called the {\em composition} and noted $(\alpha,\beta)\mapsto\alpha\beta$, such that $s(\alpha^{-1})=t(\alpha)$, $(\alpha\beta)\gamma=\alpha(\beta\gamma)$ and $\alpha(\beta\beta^{-1})=\alpha=(\gamma\gamma^{-1})\alpha$, whenever it makes sense. Given two standard groupoids $\mc G$ and $\mc S$ over $X$, a standard map $\phi:\mc G\to \mc S$ is called a morphism if it preserves the source, target, composition and inverse maps. The kernel of $\phi$, defined by $\ker\phi=\{\alpha\in \mc G|\phi(\alpha)=1\}$, and its image $\im\phi$, are subgroupoids of $\mc G$ and $\mc S$ respectively. In that case we shall say that $\mc G$ is the semi-direct product of $\im\phi$ and $\ker \phi$ and we shall write $\mc G=\im\phi\ltimes \ker\phi$. Every standard groupoid has a natural decomposition as a semi-direct product. Indeed, the map $\phi=(t,s):\mc G\to X\times X$ is a morphism from $\mc G$ to the non standard groupoid $X\times X$. Its image is a standard groupoid called the orbit equivalence relation of $\mc G$. The kernel of $\phi$ is called the {\em isotropy groupoid} of $\mc G$.

\begin{ex}
 The trivial standard groupoid (over $X$), denoted by $\mbf 1_{X}$, is defined by $X$ itself endowed with the identity maps.
\end{ex}

\begin{ex}
 A standard groupoid is called a {\em standard equivalence relation} if its isotropy groupoid is trivial.
\end{ex}

\begin{ex}
 A standard groupoid is called a {\em measurable field of countable groups} if its orbit equivalence relation is trivial or, in other words, if its target and source maps coincide. 
\end{ex}

Let $\mc G$ be a standard groupoid. A {\em $\mc G$-space} is a standard multi-bundle $U$ endowed with a fixed bundle map $\pi\in \mc B_{U}$ and a morphism of multi-bundles, called the {\em action},
\begin{equation*}
 \mc G{_{s}\star_{\pi}} U\to U\quad,\quad (\alpha,u)\mapsto \alpha u
\end{equation*}
 such that $\alpha(\beta u)=(\alpha\beta) u$ whenever this make sense. For instance $\mc G$ is a $\mc G$-space where the action is the composition, and $X$ is a $\mc G$ space with respect to the action $\alpha x=t(\alpha)$ for $x=s(\alpha)$. By proposition \ref{prop:star-vs-tensor} we have a natural map $\C[\mc G]{_{s}\otimes_{\pi}}\C[U]\to \C[U]$ noted $f\otimes g\mapsto f\cdot g$ and called the {\em dot product}. If we set $U=\mc G$, then the dot product induces a structure of unital algebra on $\C[\mc G]$ and, more generally, a structure of $\C\mc G$-module on $\C[U]$. The involution $f^{*}(\alpha)=\overline{f(\alpha^{-1})}$ turn $\C[\mc G]$ into a $*$-algebra called the {\em convolution algebra} of $\mc G$ and noted $\C\mc G$. If $\Delta$ denote the Borel subset of $\mc G$ formed by the identity elements, i.e. by elements of the form $\alpha\alpha^{-1}$, then $\Delta$ can be identified to $X$ via the source or the target maps. The dot product induces the usual point-wise product over $L^{\infty}\Delta\subset \C\mc G$, and such a pair forms a tracial extension with conditional expectation $\mbf E:\C\mc G\to L^{\infty}\Delta$ given by $\mbf E(f)=f|_{\Delta}$ and trace $\tr_{\C\mc G}(f)=\int_{\Delta}\mbf E(f)$. We shall write $\bds L^{\infty}$ for $L^{\infty}\Delta$ and $N\mc G$ for the von Neumann algebra $W^{*}(\C\mc G)$. 

\medskip
In order to define $L^{2}$-invariants of standard groupoids and understand the relation with their algebraic partners, we must introduce a series of presimplicial multibundles:
\begin{enumerate}
\item The {\em nerve} of a standard groupoid $\mc G$ is defined by the sequence of standard multi-bundles $\mc G^{(n)}=\mc G{_{s}\star_{t}}\overset{n}\dots{_{s}\star_{t}} \mc G$ (with $\mc G^{(0)}=X$ by convention) together with the maps $\mbf n_{n,i}:\mc G^{(n)}\to \mc G^{(n-1)}$ defined by  
\begin{equation}\label{eq:nerve-face-maps}
\mbf n_{n,i}(\alpha_{1},\dots,\alpha_{n})=
\begin{cases}
 (\alpha_{2},\dots,\alpha_{n}) & i=0\\
 (\alpha_{1},\dots,\alpha_{i}\alpha_{i+1},\dots,\alpha_{n})& 1\geq i\geq n-1\\
 (\alpha_{1},\dots,\alpha_{n-1}) & i=n
\end{cases}
\end{equation}

\item The {\em bar space} of $\mc G$  is the sequence $\mbf K^{n}\mc G=\mc G^{(n+2)}$ with face maps $\mbf k'_{n,i}=\mbf n_{n+2,i+1}$;

\item The {\em cyclic space} of $\mc G$ is defined by the sequence $\mbf H^{n}\mc G=(\mc G{_{s}\star_{t}}\overset{n+1}\dots{_{s}\star_{t}} \mc G)_{st}$ together with the maps
\begin{equation}\label{eq:cyclic-face-maps}
\mbf h'_{n,i}(\alpha_{0},\dots,\alpha_{n})=
\begin{cases}
(\alpha_{0},\dots,\alpha_{i}\alpha_{i+1},\dots,\alpha_{n}) &:i=0,\dots,n-1\\
(\alpha_{n}\alpha_{0},\alpha_{1},\dots,\alpha_{n-1}) & : i=n 
\end{cases}
\end{equation}

\item The {\em acyclic space} of $\mc G$ is given by $\mbf Z^{n}\mc G=\mbf H^{n+1}\mc G$ together with the maps $\mbf z'_{n,i}=\mbf h'_{n+1,i+1}$.

\item The {\em classifying space} of $\mc G$ is given by the sequence $\mbf E^{n}\mc G:=\mc G{_{t}\star_{t}}\overset{n+1}\dots{_{t}\star_{t}} \mc G$ with face maps 
\begin{equation}\label{}
\partial_{n,i}(\alpha_{0},\dots,\alpha_{n})=(\alpha_{0},\dots,\what{\alpha_{i}},\dots,\alpha_{n})
\end{equation}
\end{enumerate}

\begin{prop}\label{prop:C-algebra-functor}
 Every morphism of standard groupoids $\phi:\mc G\to \mc S$ induces a morphism of their full, bar, cyclic, acyclic and classifying spaces and complexes. Moreover  $\C\phi:\C\mc G\to \C\mc S$ is a morphism of tracial extensions that induces the identity on $\bds L^{\infty}$.
\end{prop}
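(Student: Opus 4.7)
The plan is to construct the induced maps componentwise at the level of multibundles and then appeal to the functoriality of $\C[-]$ established in Lemma~\ref{lem:functor-multibundle-multimodule}. The whole argument reduces to the bookkeeping fact that $\phi$ is a standard map preserving source, target, composition and inversion; in particular, $\phi$ automatically sends identities to identities, since $\phi(\alpha\alpha^{-1})=\phi(\alpha)\phi(\alpha)^{-1}\in\Delta_{\mc S}$.

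For each of the five presimplicial multibundles, I would define the induced map by the obvious componentwise formula $(\alpha_{1},\dots,\alpha_{n})\mapsto(\phi(\alpha_{1}),\dots,\phi(\alpha_{n}))$, restricted to the appropriate sub-multibundle for the cyclic and acyclic cases. The fiber product conditions defining $\mc G^{(n)}$, $\mbf K^{n}\mc G$, $\mbf H^{n}\mc G$, $\mbf Z^{n}\mc G$ and $\mbf E^{n}\mc G$ are preserved because $\phi$ commutes with $s$ and $t$, so each induced map is a well-defined standard map, and it is a morphism of multibundles for the same reason, every bundle map on the target factoring as $s$ or $t$ composed with a coordinate projection. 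Commutation with the face maps \eqref{eq:nerve-face-maps}, \eqref{eq:cyclic-face-maps} and their bar, acyclic and classifying variants reduces either to $\phi(\alpha_{i}\alpha_{i+1})=\phi(\alpha_{i})\phi(\alpha_{i+1})$ or to the trivial compatibility of $\phi$ with coordinate deletions. Applying Lemma~\ref{lem:functor-multibundle-multimodule} to these multibundle morphisms then produces morphisms of presimplicial multi-modules, and hence of the associated chain complexes after taking alternating sums of faces.

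For the last assertion, view $\mc G$ and $\mc S$ as multibundles with $\mc B=\{s,t\}$, so that $\phi$ is a multibundle morphism and $\C\phi=\C[\phi]$ is an $\bds L^{\infty}$-bimodular map by Lemma~\ref{lem:functor-multibundle-multimodule}. Multiplicativity follows from the functoriality of the star product in Proposition~\ref{prop:star-vs-tensor} combined with the identity $\phi\circ m_{\mc G}=m_{\mc S}\circ(\phi\star\phi)$, where $m_{\mc G}, m_{\mc S}$ denote the composition maps on the two groupoids; the involution is preserved because $\phi$ commutes with inversion. To check that $\C\phi$ restricts to the identity on $\bds L^{\infty}$, note that the preservation of identities forces $\phi$ to restrict to a morphism $\Delta_{\mc G}\to\Delta_{\mc S}$, and under the canonical identifications $\Delta_{\mc G}\simeq X\simeq \Delta_{\mc S}$ via the source maps the relation $s_{\mc S}\circ\phi=s_{\mc G}$ forces this restriction to coincide with $\mathrm{id}_{X}$; hence $\C\phi(f)=f$ for every $f\in\bds L^{\infty}$.

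The argument is almost entirely formal. The only point that requires any attention is checking that the source/target constraints defining the cyclic and classifying fiber products are inherited by $\mc S$, which is immediate from $s_{\mc S}\circ\phi=s_{\mc G}$ and $t_{\mc S}\circ\phi=t_{\mc G}$. Beyond Lemma~\ref{lem:functor-multibundle-multimodule}, Proposition~\ref{prop:star-vs-tensor} and the axioms of a groupoid morphism, no additional input is required.
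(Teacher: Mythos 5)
Your proof is correct and takes essentially the same route as the paper, whose entire argument is the one-line remark that everything follows from Proposition~\ref{prop:star-vs-tensor}; you have merely made explicit the componentwise maps on the five presimplicial multibundles and the appeal to Lemma~\ref{lem:functor-multibundle-multimodule} that the paper leaves implicit. The one point neither you nor the paper addresses is whether ``morphism of tracial extensions'' is meant to include trace preservation --- which can fail for non-injective $\phi$ (e.g.\ the map from a countable group to the trivial groupoid sends $f$ to $\sum_{g}f(g)$, not to $f(e)$) --- but that is an imprecision in the statement rather than a gap in your argument, and it is harmless for the paper's only application, where $\phi$ is an isomorphism.
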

\begin{proof}
All the assertions can be easily deduced from proposition \ref{prop:star-vs-tensor}.
\end{proof}
 
\begin{prop}\label{prop:iso-bar-cyclic-complexes-groupoids}
The dot product induces a $\C\mc G$-bimodule structure on $C(\mc G)$ which is preserved by the face maps of the geometric bar complex $C_{\bullet}(\mbf K\mc G)$. Furthermore:  
\begin{enumerate}
\item $C_{\bullet}(\mbf K\mc G)$ is isomorphic to the Hochschild bar complex of the extension $\C\mc G/\bds L^{\infty}$ in the category of presimplicial $\C\mc G$-bimodules;
\item The cyclic complex of $\mc G$ is isomorphic to the Hochschild complex of $\C\mc G/\bds L^{\infty}$ in the category of presimplicial $\bds L^{\infty}$-multi-modules;
\end{enumerate}
\end{prop}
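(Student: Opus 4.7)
The strategy is to apply Proposition~\ref{prop:star-vs-tensor} iteratively. The star product defines, for every $k\geq 1$, a natural isomorphism of multi-modules
\begin{equation*}
\Theta_{k}\colon \C\mc G^{\otimes_{\bds L^{\infty}}k}\xrightarrow{\sim}\C[\mc G^{(k)}],\qquad f_{1}\otimes\cdots\otimes f_{k}\longmapsto f_{1}\star\cdots\star f_{k},
\end{equation*}
because the $\bds L^{\infty}$-balancing in the iterated tensor product (right action via $s$ paired with left action via $t$) is precisely the fiber-product condition defining the nerve $\mc G^{(k)}$. Moreover, by the definition of the convolution product on $\C\mc G$, the multiplication $\C\mc G\otimes_{\bds L^{\infty}}\C\mc G\to\C\mc G$ factors as $\Theta_{2}$ followed by the pushforward $\C[\mathrm{comp}]\colon\C[\mc G^{(2)}]\to\C[\mc G]$ along the groupoid composition map.

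For part (i), take $k=n+2$. The left and right actions of $\mc G$ on $\mc G^{(n+2)}$ by composition on the first and last coordinates turn $\mc G^{(n+2)}$ into a $\mc G$-bibundle; applying $\C[-]$ and using the definition of the dot product yields a $\C\mc G$-bimodule structure on $C_{n}(\mbf K\mc G)$ that corresponds under $\Theta_{n+2}$ to the standard outer $\C\mc G$-bimodule structure on $\mbf K_{n}(\C\mc G/\bds L^{\infty})$. The Hochschild face $\mbf k_{n,i}$ multiplies factors at positions $i$ and $i+1$, which under $\Theta_{n+2}$ becomes pushforward along the map $\mc G^{(n+2)}\to\mc G^{(n+1)}$ composing the two consecutive groupoid elements at those positions; by functoriality of $\C[-]$ (Lemma~\ref{lem:functor-multibundle-multimodule}) this is $\C[\mbf n_{n+2,i+1}]=\C[\mbf k'_{n,i}]$. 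Associativity of composition in $\mc G$ ensures the face maps commute with the $\C\mc G$-biaction, proving the first isomorphism in the category of presimplicial $\C\mc G$-bimodules.

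For part (ii), the chain space $C_{n}(\C\mc G/\bds L^{\infty})$ identifies with the $\bds L^{\infty}$-coinvariants $(\C\mc G^{\otimes_{\bds L^{\infty}}(n+1)})_{\bds L^{\infty}}$ with respect to the outer bimodule structure. Applying $\Theta_{n+1}$ yields $\C[\mc G^{(n+1)}]_{\bds L^{\infty}}$, the coinvariants for the $\bds L^{\infty}$-actions induced by the outer bundle maps $t\circ\pi_{0}$ and $s\circ\pi_{n}$. Proposition~\ref{prop:multi-modules-fulness} identifies these coinvariants with the invariants, which equal $\C[(\mc G^{(n+1)})^{t\pi_{0},s\pi_{n}}]=\C[\mbf H^{n}\mc G]=C_{n}(\mbf H\mc G)$; under this identification the remaining $\bds L^{\infty}$-actions from the interior balancings in the tensor product correspond to pullback along the interior bundle maps of $\mbf H^{n}\mc G$. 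For the face maps, those with $0\leq i\leq n-1$ match $\mbf h'_{n,i}$ just as in (i); the cyclic face $\mbf h_{n,n}(a_{0}\otimes\cdots\otimes a_{n})=a_{n}a_{0}\otimes a_{1}\otimes\cdots\otimes a_{n-1}$ corresponds to composing the last and first coordinates, an operation well-defined precisely where $s(\alpha_{n})=t(\alpha_{0})$, that is, on $\mbf H^{n}\mc G$, and yields $(\alpha_{n}\alpha_{0},\alpha_{1},\dots,\alpha_{n-1})=\mbf h'_{n,n}$.

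The main technical point is the passage from outer $\bds L^{\infty}$-coinvariants to invariants in (ii) via Proposition~\ref{prop:multi-modules-fulness}: this is what makes the cyclic constraint $s(\alpha_{n})=t(\alpha_{0})$ emerge naturally from the Hochschild side. Everything else reduces to bookkeeping with the functoriality of $\C[-]$ and the compatibility of the natural isomorphisms $\Theta_{k}$ with both the algebra multiplication and the multi-module structures.
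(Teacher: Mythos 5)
Your argument is correct and follows essentially the same route as the paper: Proposition~\ref{prop:star-vs-tensor} gives the identification $\C[\mc G^{(k)}]\simeq \C\mc G^{\otimes_{\bds L^{\infty}}k}$ for the bar complex, and Proposition~\ref{prop:multi-modules-fulness} supplies the coinvariants-to-invariants step that produces the cyclic constraint in part (ii), exactly as in the paper's proof. Your proposal merely spells out the face-map and bimodule compatibilities that the paper dismisses as obvious.
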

\begin{proof}
By proposition \ref{prop:star-vs-tensor}, the star product induces natural isomorphisms
\begin{equation}\label{}
C_{n}(\mbf K\mc G)=\C[\mc G{_{s}\star_{t}}\overset{n+2}\dots{_{s}\star_{t}}\mc G]\simeq \C\mc G{_{s}\otimes_{t}}\overset{n+2}\dots{_{s}\otimes_{t}}\C\mc G =\mbf K_{n}(\C\mc G/\bds L^{\infty})
\end{equation}
while by proposition \ref{prop:multi-modules-fulness} we have
\begin{multline}\label{}
C_{n}(\mbf H\mc G)
	=\C[(\mc G{_{s}\star_{t}}\overset{n+1}\dots{_{s}\star_{t}}\mc G)^{st}]\\
	=\C[\mc G{_{s}\star_{t}}\overset{n+1}\dots{_{s}\star_{t}}\mc G]^{\bds L^{\infty}}
	\simeq \C[\mc G{_{s}\star_{t}}\overset{n+1}\dots{_{s}\star_{t}}\mc G]_{\bds L^{\infty}} \\
	\simeq (\C\mc G{_{s}\otimes_{t}}\overset{n+1}\dots{_{s}\otimes_{t}}\C\mc G)_{\bds L^{\infty}}
	= C_{n}(\C\mc G/\bds L^{\infty})
\end{multline}
Now it is obvious, in view of formulas \eqref{eq:bar-face-maps}, \eqref{eq:Hochschild-face-maps} and \eqref{eq:cyclic-face-maps}, that the above isomorphisms preserve the face maps. 
\end{proof}

The structure of $\C\mc G$-module induced by the dot product on $C_{0}(\mc G)=L^{\infty}X$ has been used by Roman Sauer in \cite{Sau} to define the $L^{2}$-homology groups of $\mc G$ as the sequence of $N\mc G$-modules $H^{\2}_{\bullet}(\mc G)=\op{Tor}_{\bullet}^{\C\mc G}(L^{\infty}X,N\mc G)$. The corresponding von Neumann dimensions 
\begin{equation}\label{}
\beta_{\bullet}^{(2)}(\mc G):=\dim_{N\mc G}\op{Tor}_{\bullet}^{\C\mc G}(L^{\infty}X,N\mc G)
\end{equation}
are called the $L^{2}$-Betti numbers of $\mc G$. He proved that this definition yields to $L^{2}$-Betti numbers of standard equivalence relations, under the assumption that they are generated by essentially free actions of countable groups. The freeness hypothesis was dropped some years later by Neshveyev and Rustad in \cite{NeshRus}. 

\subsection{Proof of theorem \ref{thm:L2=L2-groupoids}}
Since $C_{\bullet}(\mbf E\mc G)$ is a $\C\mc G$-projective resolution of $\bds L^{\infty}$, one can define $H^{\2}_{\bullet}(\mc G)$ as the homology groups of the presimplicial $N\mc G$-module $N\mc G\otimes_{\C\mc G}C_{\bullet}(\mbf E\mc G)$. On the other hand $H^{\2}_{\bullet}(\C\mc G/\bds L^{\infty})$ are defined as the homology groups of $W^{*}(A)\otimes_{A}\mbf E_{\bullet}(\C\mc G/\bds L^{\infty})$ where $A$ is the fiber square $\C\mc G*_{\bds L^{\infty}}\C\mc G$. Hence, we need to compare $C_{\bullet}(\mbf E\mc G)$ to $\mbf E_{\bullet}(\C\mc G/\bds L^{\infty})$ and $\C\mc G$ to $\C\mc G*_{\bds L^{\infty}}\C\mc G$. 

In order to do so, we define the {\em enveloping  groupoid} of $\mc G$ as the standard space $\mc G^{e}=\{(\alpha,\beta)\in \mc G^{(2)}|s(\alpha)=t(\beta),t(\alpha)=s(\beta)\}$ endowed with the source and target maps $s(\alpha,\beta)=s(\alpha)=s(\beta)$ and $t(\alpha,\alpha')=t(\alpha)=t(\beta)$, while the inversion and composition are given by
\begin{equation}\label{}
(\alpha,\beta)^{-1}=(\alpha^{-1},\beta^{-1})\quad,\quad (\alpha,\beta)(\alpha',\beta')=(\alpha'\alpha,\beta\beta')
\end{equation}
Observe that $\mc G$ can be viewed as a sous-groupoid of $\mc G$ via the morphism $\alpha\mapsto (\alpha^{-1},\alpha)$. For instance, if $\mc G$ is a standard equivalence relation, then the inclusion $\mc G\subset \mc G^{e}$ is actually an isomorphism.

\begin{thm}
Let $\mc G$ a standard groupoid, then the map $T\mapsto T(\mbf 1\otimes \mbf 1)$ defines an isomorphism between $\C\mc G*_{\bds L^{\infty}}\C\mc G$ and $\C\mc G^{e}$. In particular, the von Neumann algebra $\C\mc G\st_{\bds L^{\infty}}\C\mc G$ coincides with the commutant of $\C\mc G\otimes \C\mc G^{o}$ in $L^{2}\mc G^{(2)}$. 
\end{thm}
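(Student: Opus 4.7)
The plan is to identify $\mathbb{C}\mathcal{G}\otimes_{\bds L^{\infty}}\mathbb{C}\mathcal{G}$ with $\mathbb{C}\mathcal{G}^{(2)}$ via the star-product isomorphism of Proposition \ref{prop:star-vs-tensor}, and to analyse the map $\Phi:T\mapsto T(\mathbf{1}\otimes\mathbf{1})$ under this identification. Injectivity of $\Phi$ is immediate from Proposition \ref{prop:algebraic-fiber-product}(i). For a generator $u\ast v$ of $\mathbb{C}\mathcal{G}\ast_{\bds L^{\infty}}\mathbb{C}\mathcal{G}$, $\Phi(u\ast v)=u\otimes v$, which under the identification becomes the function $(\alpha,\beta)\mapsto u(\alpha)v(\beta)$ on $\mathcal{G}^{(2)}$. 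What remains is to recognise the image of $\Phi$ as the subspace $\mathbb{C}\mathcal{G}^{e}\subset\mathbb{C}\mathcal{G}^{(2)}$.

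For the inclusion $\im\Phi\subset\mathbb{C}\mathcal{G}^{e}$, I would decompose $u=\sum c_{\sigma}\chi_{\sigma}$ and $v=\sum d_{\tau}\chi_{\tau}$ over bisections of $\mathcal{G}$ using Lusin's selection theorem. Writing out the compatibility $u^{*}xu=vxv^{*}$ for $x\in\bds L^{\infty}$ on the bisection components yields, for each pair $(\sigma,\tau)$ that occurs with nonzero coefficient, the condition $\phi_{\sigma\tau}=\op{id}$ on the induced bijection of $X$, which is equivalent to $\sigma\tau\subset\mathcal{G}_{\op{iso}}$ and hence to $\sigma{_{s}\star_{t}}\tau\subset\mathcal{G}^{e}$. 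So $u(\alpha)v(\beta)$ is supported on $\mathcal{G}^{e}$.

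For the reverse inclusion, by Proposition \ref{prop:multi-bundle-decomposition} applied to the multibundle $\mathcal{G}^{e}$ over $X$ it is enough to realise $\chi_{\rho}$ for $\rho$ a bisection of $\mathcal{G}^{e}$. The projections $\sigma=\pi_{1}(\rho)$ and $\tau=\pi_{2}(\rho)$ are partial bisections of $\mathcal{G}$, and bisectivity of $\rho$ in $\mathcal{G}^{e}$ forces both $\sigma{_{s}\star_{t}}\tau=\rho$ and $s(\sigma)=t(\tau)$. I would then extend $\sigma,\tau$ to full bisections $\hat\sigma,\hat\tau$ of $\mathcal{G}$ with $\hat\sigma\star\hat\tau\subset\mathcal{G}^{e}$, so that $(u_{\hat\sigma},u_{\hat\tau})\in\mathbf{S}$ and $\Phi(u_{\hat\sigma}\ast u_{\hat\tau})=\chi_{\hat\sigma\star\hat\tau}$. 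Multiplying on the left by the central element $\hat\chi_{A}$ with $A=s(\sigma)=t(\tau)$, which sits in the fiber product by Proposition \ref{prop:algebraic-fiber-product}(iii), localises the support to those $(\alpha,\beta)$ with $s(\alpha)\in A$; since $\hat\sigma,\hat\tau$ are bisections, this is exactly $\sigma{_{s}\star_{t}}\tau=\rho$.

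Multiplicativity of $\Phi$ then follows from $(u_{1}\ast v_{1})(u_{2}\ast v_{2})=(u_{2}u_{1})\ast(v_{1}v_{2})$ together with the composition $(\gamma_{1},\delta_{1})(\gamma_{2},\delta_{2})=(\gamma_{2}\gamma_{1},\delta_{1}\delta_{2})$ in $\mathcal{G}^{e}$, and the involution matches similarly. The ``in particular'' clause is then a matter of weak closures: the von Neumann fiber square lives inside the commutant $(\mathbb{C}\mathcal{G}\otimes\mathbb{C}\mathcal{G}^{o})'$ in $\mathfrak{B}(L^{2}\mathcal{G}^{(2)})$ by construction, while the algebraic isomorphism combined with the weak density of $\mathbb{C}\mathcal{G}^{e}$ in this commutant gives equality. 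The main obstacle will be the extension step above: when $\mathcal{G}$ does not contain a full bisection extending $\sigma$ (or $\tau$) subject to $\hat\sigma\star\hat\tau\subset\mathcal{G}^{e}$, one should decompose $\rho$ countably into pieces that do admit such extensions and reassemble using the compatibility with directed sums (Proposition \ref{prop:fiber-product-directed-sums}).
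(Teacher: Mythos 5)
Your route is genuinely different from the paper's. The paper splits $\mc G=\mc R{_{s}\star_{t}}\mc I$ into its orbit equivalence relation and its isotropy groupoid, factors every normalizing unitary as $u=\bar u\otimes\gamma_{u}$ so that $u*v=(\bar u*\bar v)\otimes(\gamma_{u}*\gamma_{v})$, and then treats the two extreme cases separately: $\mc R$ via Theorem \ref{thm:square-fiber-twisted-algebra} (where normalizing unitaries have the Feldman--Moore form $\alpha u_{\phi}$ and the groupoid isomorphism $\mc R\simeq\mc R^{e}$ does the work), and $\mc I$ via Proposition \ref{prop:algebraic-fiber-product}(iv). Your direct bisection analysis avoids that reduction entirely. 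Your computation of the image is essentially right, with one caveat: the componentwise condition $\phi_{\sigma\tau}=\op{id}$ does not follow from the compatibility $u^{*}xu=vxv^{*}$ alone (cross-terms could a priori cancel); you need the unitarity of $u$ and $v$ to force each of them to be supported over the graph of a single automorphism of $X$, after which the compatibility identifies the two automorphisms as mutually inverse. Your multiplicativity check against the composition law of $\mc G^{e}$ is correct.

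The genuine gap is in the surjectivity step, and the patch you propose does not close it. Whether a partial bisection $\sigma$ of $\mc G$ extends to a full one is a global property of the groupoid, not of the piece: cutting $\rho$ into smaller pieces cannot create extendability, and even if it could, a countable decomposition would express $\chi_{\rho}$ as an infinite sum, which is not an element of the algebraic fiber product (Proposition \ref{prop:fiber-product-directed-sums} concerns finite directed sums of algebras, not infinite sums of elements inside one algebra). What actually makes the extension work is measure preservation: the partial isomorphism $\phi_{\sigma}:s(\sigma)\to t(\sigma)$ of the orbit equivalence relation gives $s(\sigma)$ and $t(\sigma)$ equal mass on every invariant set, so its complement can be matched inside the full pseudogroup by the usual exhaustion argument, and the resulting full isomorphism lifts to a bisection $\hat\sigma$ of $\mc G$ by measurable selection (Theorem \ref{thm:Lusin-selection}). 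Once $\hat\sigma$ exists the coupled constraint is no extra difficulty, since one may take $\hat\tau=\tau\sqcup(\hat\sigma\setminus\sigma)^{-1}$. With that lemma supplied your argument is complete; note finally that the ``in particular'' clause requires the weak density of $\C\mc G^{e}$ in the commutant of $\C\mc G\otimes\C\mc G^{o}$ on $L^{2}\mc G^{\mathsmaller{(2)}}$, which you assert without proof --- but so does the paper.
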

\begin{proof}
If $\mc R$ and $\mc I$ denote respectively the orbit equivalence relation and the isotropy subgroupoid of $\mc G$, then $\mc G=\mc R{_{s}\star_{t}} \mc I$ and $\mc G^{e}=\mc R^{e}{_{s}\star_{t}} \mc I^{e}$. In particular we have $C_{1}(\mc G^{e})=C_{1}(\mc R^{e})\otimes_{\bds L^{\infty}} C_{1}(\mc I^{e})$ and $C_{2}(\mc G)=C_{2}(\mc R)\otimes_{\bds L^{\infty}} C_{2}(\mc I)$ as $\bds L^{\infty}$-bimodules. For every $u\in \mbf U_{\C\mc G/\bds L^{\infty}}$ there exists $\phi\in \Aut(X)$ such that $uau^{*}=a_{\phi}$ for every $a\in \bds L^{\infty}$. In other words, there exists $\bar u\in \mbf U_{\C\mc R/\bds L^{\infty}}$ such that $u=\bar u\otimes \gamma_{u}$ for some unitary $\gamma_{u}\in \mbf U_{\C\mc I}$. A simple computation shows then that $u*v=(\bar u*\bar v)\otimes (\gamma_{u}*\gamma_{v})$. Hence
\begin{equation}\label{}
(u*v)(\mbf 1_{\mc G}\otimes\mbf 1_{\mc G})=(\bar u*\bar v)(\mbf 1_{\mc R}\otimes\mbf 1_{\mc R})\otimes (\gamma_{u}*\gamma_{v})(\mbf 1_{\mc I}\otimes\mbf 1_{\mc I})
\end{equation}
which shows that it suffices to prove the theorem for $\mc R$ and $\mc I$. In the first case $\mc R\simeq \mc R^{e}$ via the map $\alpha\to (\alpha^{-1},\alpha)$ and the result follows from theorem \ref{thm:square-fiber-twisted-algebra}. In the case of the isotropy groupoid $\mc I$, the extension $\C\mc I/\bds L^{\infty}$ is central and the result is an immediate consequence of (iv) in Proposition \ref{prop:algebraic-fiber-product}.
\end{proof}

\begin{lem}\label{lem:iso:classifying-cyclic-isomorphism}
The presimplicial $\C\mc G^{e}$-modules 
\begin{equation}\label{}
\mbf E_{\bullet}(\C\mc G/\bds L^{\infty}),\quad C_{\bullet}(\mbf Z\mc G)\quad\text{and}\quad \C\mc G^{e}\otimes_{\C\mc G}C_{\bullet}(\mbf E\mc G)
\end{equation}
are isomorphic.
\end{lem}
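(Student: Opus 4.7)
The plan is to exploit the dictionary between standard multi-bundles over $X$ and multi-modules over $\bds L^{\infty}$ established in Propositions \ref{prop:star-vs-tensor} and \ref{prop:multi-modules-fulness}, together with the identification $\C\mc G\ast_{\bds L^{\infty}}\C\mc G\simeq \C\mc G^{e}$ from the preceding theorem. In this way all three complexes will be realized as presimplicial $\C\mc G^{e}$-modules coming from the same geometric data, and the identifications become largely combinatorial.

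First I would establish the isomorphism between $C_{\bullet}(\mbf Z\mc G)$ and $\mbf E_{\bullet}(\C\mc G/\bds L^{\infty})$. By definition $\mbf Z^{n}\mc G=\mbf H^{n+1}\mc G$ and the face maps of the acyclic space are given by the shift $\mbf z'_{n,i}=\mbf h'_{n+1,i+1}$. Combining the star-product translation of Proposition \ref{prop:star-vs-tensor} with the invariants-versus-coinvariants equivalence of Proposition \ref{prop:multi-modules-fulness}, exactly as in the second half of the proof of Proposition \ref{prop:iso-bar-cyclic-complexes-groupoids}, one obtains a natural isomorphism
\begin{equation*}
C_{n}(\mbf Z\mc G)\;\simeq\;(\C\mc G\,{_{s}\otimes_{t}}\cdots{_{s}\otimes_{t}}\,\C\mc G)_{\bds L^{\infty}}\;=\;C_{n+1}(\C\mc G/\bds L^{\infty})\;=\;\mbf E_{n}(\C\mc G/\bds L^{\infty}),
\end{equation*}
where the last equality reflects that $\mbf E_{\bullet}(\C\mc G/\bds L^{\infty})$ is the acyclic Hochschild complex $\mbf Z_{\bullet}(\C\mc G/\bds L^{\infty})$ of the tracial extension. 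The matching face maps $\mbf z_{n,i}=\mbf h_{n+1,i+1}$ on the algebraic side guarantee an isomorphism of presimplicial multi-modules, and the identification $\C\mc G\ast_{\bds L^{\infty}}\C\mc G\simeq\C\mc G^{e}$ promotes it to one of presimplicial $\C\mc G^{e}$-modules.

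Next I would construct the geometric isomorphism between $\C\mc G^{e}\otimes_{\C\mc G}C_{\bullet}(\mbf E\mc G)$ and $C_{\bullet}(\mbf Z\mc G)$ via the explicit morphism of multi-bundles
\begin{equation*}
\Phi\bigl((\alpha,\beta),(\delta_{0},\dots,\delta_{n})\bigr)=\bigl(\alpha\delta_{0},\;\delta_{0}^{-1}\delta_{1},\;\dots,\;\delta_{n-1}^{-1}\delta_{n},\;\delta_{n}^{-1}\beta\bigr)
\end{equation*}
from $\mc G^{e}{_{s}\star_{t}}\mbf E^{n}\mc G$ to $\mbf Z^{n}\mc G$. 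The intermediate compositions $\delta_{i-1}^{-1}\delta_{i}$ are well defined because the $\delta_{i}$ share a common target, and the cyclic condition $s(\delta_{n}^{-1}\beta)=s(\beta)=t(\alpha)=t(\alpha\delta_{0})$ uses both defining constraints of $\mc G^{e}$. Setting $\delta_{0}$ equal to the unit at the common target yields a canonical inverse: the $\mc G$-equivalence class of $\Phi^{-1}(\gamma_{0},\dots,\gamma_{n+1})$ is represented by $\alpha=\gamma_{0}$, $\delta_{i}=\gamma_{1}\cdots\gamma_{i}$ and $\beta=\gamma_{1}\cdots\gamma_{n+1}$, which shows that $\Phi$ descends to a bijection from the diagonal-$\mc G$-quotient of $\mc G^{e}{_{s}\star_{t}}\mbf E^{n}\mc G$ onto $\mbf Z^{n}\mc G$. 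Passing back through Proposition \ref{prop:star-vs-tensor} gives the desired isomorphism in degree $n$; the non-cyclic face maps $\mbf z'_{n,i}$ for $i<n$ correspond cleanly to removing $\delta_{i+1}$ in $\mbf E^{n}\mc G$ via the telescoping $\delta_{i}^{-1}\delta_{i+1}\cdot\delta_{i+1}^{-1}\delta_{i+2}=\delta_{i}^{-1}\delta_{i+2}$.

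The hard part will be simultaneously reconciling two points: the cyclic face map $\mbf z'_{n,n}$, which produces the entry $\delta_{n}^{-1}\beta\alpha\delta_{0}$ and does not correspond to a simple deletion in $\mbf E^{n}\mc G$; and the $\C\mc G^{e}$-equivariance of $\Phi$, which requires the formula to match the right action of $\C\mc G\hookrightarrow\C\mc G^{e}$ via $\delta\mapsto(\delta^{-1},\delta)$ with the diagonal left action on $\mbf E^{n}\mc G$. Both issues force one to use the $\mc G$-tensor equivalence in $\C\mc G^{e}\otimes_{\C\mc G}C_{n}(\mbf E\mc G)$ nontrivially, and likely a fine-tuning of $\Phi$ by inversions in its outer coordinates. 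Once the conventions are consistent, the last step is verifying that the induced $\C\mc G^{e}$-module structure on $C_{\bullet}(\mbf Z\mc G)$ agrees with the one coming from the first isomorphism via $\C\mc G\ast_{\bds L^{\infty}}\C\mc G\simeq\C\mc G^{e}$.
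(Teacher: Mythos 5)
Your proposal follows essentially the same route as the paper: the first identification is exactly item (ii) of Proposition \ref{prop:iso-bar-cyclic-complexes-groupoids}, and your map $\Phi$ is, up to a cyclic shift of the outer coordinates, the paper's $\xi_{n}(\gamma\star\omega)=\gamma\theta_{n}(1,\omega)$ with $\theta_{n}(\alpha_{0},\dots,\alpha_{n})=(\alpha_{n}^{-1},\alpha_{0},\alpha_{0}^{-1}\alpha_{1},\dots,\alpha_{n-1}^{-1}\alpha_{n})$, together with the same explicit inverse on the level of standard multi-bundles. The ``fine-tuning by inversions in the outer coordinates'' you anticipate is precisely the paper's placement of $\alpha_{n}^{-1}$ in the first slot, which makes $\theta_{n}$ equivariant for the $\mc G^{e}$-action on the first two coordinates of $\mbf Z^{n}\mc G$ and simultaneously takes care of the cyclic face map.
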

\begin{proof}
The fact that $\mbf Z_{\bullet}(\C\mc G/\bds L^{\infty})$ and $C_{\bullet}(\mbf Z\mc G)$ are isomorphic as presimplicial multi-modules follows from item (ii) in proposition \ref{prop:iso-bar-cyclic-complexes-groupoids}. Hence it suffices to find an isomorphism between $\C\mc G^{e}\otimes_{\C\mc G}C_{\bullet}(\mbf E\mc G)$ and $C_{\bullet}(\mbf Z\mc G)$. One can easily check that the map $\theta_{n}:\mbf E^{n}\mc G\to \mbf Z^{n}\mc G$ given by the formula
\begin{equation}\label{eq:theta-map}
(\alpha_{0},\dots,\alpha_{n})\mapsto (\alpha_{n}^{-1},\alpha_{0},\alpha_{0}^{-1}\alpha_{1},\dots,\alpha_{n-1}^{-1}\alpha_{n})
\end{equation}
defines a morphism of presimplicial multi-bundles.  For instance $\mbf E^{0}\mc G=\mc G$, $\mbf Z^{0}\mc G=\mc G^{e}$ and $\theta_{0}:\mc G\to \mc G^{e}$ coincides with the diagonal inclusion $\alpha\mapsto (\alpha^{-1},\alpha)$. Since the action of $\mc G^{e}$ on $\mbf Z^{n}\mc G$ is given by
\begin{equation}\label{}
(\alpha,\beta)(\alpha_{0},\alpha_{1},\dots,\alpha_{n+1})=(\alpha_{0}\alpha,\beta\alpha_{1},\alpha_{2},\dots,\alpha_{n+1})
\end{equation}
and the action of $\mc G$ on $\mbf E^{n}\mc G$ is given by
\begin{equation}\label{}
\alpha(\alpha_{0},\dots,\alpha_{n})=(\alpha\alpha_{0},\dots,\alpha\alpha_{n})
\end{equation}
a straightforward computation using \eqref{eq:theta-map} shows that $\theta_{n}(\alpha\omega)=(\alpha^{-1},\alpha)\theta_{n}(\omega)$ for every $\alpha\in \mc G$ and $\omega\in \mbf E^{n}\mc G$ for which this make sense. It follows that $\C[\theta_{\bullet}]:C_{\bullet}(\mbf E\mc G)\to C_{\bullet}(\mbf Z\mc G)$ is a morphism of presimplicial $\C\mc G$-modules, and we obtain a morphism of presimplicial $\C\mc G^{e}$-modules 
\begin{equation}\label{}
\Theta_{\bullet}:\C\mc G^{e}\otimes_{\C\mc G}C_{\bullet}(\mbf E\mc G)\to \C\mc G^{e}\otimes_{\C\mc G^{e}}C_{\bullet}(\mbf Z\mc G)=C_{\bullet}(\mbf Z\mc G)
\end{equation}
by setting $\Theta_{\bullet}=\mbf 1\otimes \C[\theta_{\bullet}]$. In order to complete the proof, it suffices to show that $\Theta_{n}$ is bijective for every $n$. For this, let us observe that, up to the identifications 
\begin{equation}\label{}
\C\mc G^{e}\otimes_{\C\mc G}C_{n}(\mbf E\mc G)=\C\mc G^{e}\otimes_{\bds L^{\infty}}C_{n-1}(\mbf E\mc G)=\C[\mc G^{e}{_{s}\star_{t}}\mbf E^{n-1}\mc G]
\end{equation}
one has $\Theta_{n}=\C[\xi_{n}]$ where $\xi_{n}:\mc G^{e}{_{s}\star_{t}}\mbf E^{n-1}\mc G\to \mbf Z^{n}\mc G$ is the standard map $\xi_{n}(\gamma\star\omega)=\gamma\theta_{n}(1,\omega)$. Now a direct computation shows that the inverse of $\xi_{n}$ is given by $\omega\mapsto \phi_{n}(\omega)\star\psi_{n}(\omega)$ where
\begin{equation}\label{}
\phi_{n}(\alpha_{n+1},\alpha_{0},\dots,\alpha_{n})=(\alpha_{1}\cdots\alpha_{n+1},\alpha_{0})
\end{equation}
and 
\begin{equation}\label{}
\psi_{n}(\alpha_{n+1},\alpha_{0},\dots,\alpha_{n})=(\alpha_{1},\alpha_{1}\alpha_{2},\dots,\alpha_{n-1}\alpha_{n})
\end{equation}
This finishes the proof.
\end{proof}

Now we are able to prove theorem \ref{thm:L2=L2-groupoids}. In view of the previous lemma, we have
\begin{align*}\label{}
C^{(2)}_{\bullet}(\C\mc G/\bds L^{\infty})
	&=N\mc G^{e}\otimes_{\C\mc G^{e}}\mbf E_{\bullet}(\C\mc G/\bds L^{\infty})\\
	&=N\mc G^{e}\otimes_{\C\mc G^{e}}\C\mc G^{e}\otimes_{\C\mc G}C_{\bullet}(\mbf E\mc G)\\
	&=N\mc G^{e}\otimes_{\C\mc G}C_{\bullet}(\mbf E\mc G)
\end{align*}
Since $C_{\bullet}(\mbf E\mc G)$ is a $\C\mc G$-projective resolution of $\bds L^{\infty}$ and the functor $N\mc G^{e}\otimes_{N\mc G}-$ is flat, it follows that
\begin{align*}\label{}
H^{(2)}_{\bullet}(\C\mc G/\bds L^{\infty})
	&=\op{Tor}_{\bullet}^{\C\mc G}(\bds L^{\infty},N\mc G^{e})\\
	&=\op{Tor}_{\bullet}^{\C\mc G}(\bds L^{\infty},N\mc G)\otimes_{N\mc G}N\mc G^{e}\\
	&=H^{(2)}_{\bullet}(\mc G)\otimes_{N\mc G}N\mc G^{e}
\end{align*}
which proves the theorem.

\subsection{Proof of theorem \ref{thm:residual-L2-Betti-numbers}}  
Let $\mc R$ a standard equivalence relation and $\sigma:\mc R^{(2)}\to \mbf T$ a measurable function with values in the unit circle. For every $a,b\in C_{1}(\mc R)$ we define a bilinear product by the formula
\begin{equation}\label{}
(ab)(x,z)=\sum_{y}a(x,y)b(y,z)\sigma(x,y,z).
\end{equation}
In order to obtain an associative product $\sigma$ has to be a $2$-cocycle of $\mc R$, i.e. it verifies the relation $\sigma(x,y,z)\sigma(x,z,t)=\sigma(y,z,t)\sigma(x,y,t)$. We shall note $\C\mc R_{\sigma}$ the $*$-algebra given by $C_{1}(\mc R)$ endowed with the above twisted product and the involution $a^{*}(x,y)=\overline{a(y,x)}$. The induced product on the diagonal subalgebra $\bds L^{\infty}$ is given by $(ab)(x)=a(x)b(x)u(x)$ where $u(x)=\sigma(x,x,x)$. A cocycle is said to be a coboundary if there exists as measurable function $c:\mc R\to \mbf T$ such that $\sigma(x,y,z) = c(y,z) c(x,z)^{-1} c(x,y)$. Let us note $Z^{2}(\mc R:\mbf T)$ the abelian group of $2$-cocycles and $B^{2}(\mc R:\mbf T)$ the abelian group subgroup of coboundarie. The isomorphism class of $\C\mc R_{\sigma}$ depend only on the class of $\sigma$ in the $2$-cohomology group $H^{2}(\mc R:\mbf T)=Z^{2}(\mc R:\mbf T)/B^{2}(\mc R:\mbf T)$. Since every $2$-cocycle is cohomologous to a skew symmetric one, one may assume $u(x)=1$ and $\sigma(x,y,z)\sigma(z,y,x)=1$ (cf. \cite[page 331]{MR0578730}). We shall write $N\mc R_{\sigma}$ for $W^{*}(\C\mc R_{\sigma})$.

\begin{thm}\label{thm:square-fiber-twisted-algebra}
The fiber square of $\C\mc R_{\sigma}/\bds L^{\infty}$ does not remember the cocycle. More precisely the map $T\mapsto T(\mbf 1\otimes\mbf 1)$ defines an isomorphism between $\C\mc R_{\sigma}*_{\bds L^{\infty}}\C\mc R_{\sigma}$ and $\C\mc R^{e}$ inducing the identity on $\bds L^{\infty}$.
\end{thm}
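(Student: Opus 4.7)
My plan is to exhibit the map $\Phi(T)=T(\mbf 1\otimes\mbf 1)$ as an isomorphism onto $\C\mc R^{e}$, first by identifying its image and then by showing that the cocycle cancels in the multiplication.

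By Proposition~\ref{prop:algebraic-fiber-product}(i) the map $\Phi$ is injective, and by (ii) it takes values in the subspace of $\bds L^{\infty}$-invariants of $\C\mc R_{\sigma}\otimes_{\bds L^{\infty}}\C\mc R_{\sigma}$. Since $\C\mc R_{\sigma}$ agrees with $\C\mc R$ as an $\bds L^{\infty}$-bimodule (only the algebra structure is twisted), Propositions~\ref{prop:star-vs-tensor} and~\ref{prop:multi-modules-fulness} identify this invariant subspace with $\C[W]$, where $W\subset\mc R^{(2)}$ is the locus on which the two outer boundary maps $t\pi_{1}$ and $s\pi_{2}$ coincide. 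An element of $\mc R^{(2)}$ has the form $((a,b),(b,c))$, and the condition $t\pi_{1}=s\pi_{2}$ becomes $a=c$, so $W=\mc R^{e}$. Thus $\Phi$ is an injection into $\C\mc R^{e}$.

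For a generator $u*v$ with $(u,v)\in\mbf S$ and $u$ implementing a partial automorphism $\phi$ (so that $v$ implements $\phi^{-1}$), the star product $u\star v$ is concentrated on $\set{((a,\phi(a)),(\phi(a),a)):a\in\mathrm{dom}(\phi)}\subset\mc R^{e}$. Surjectivity reduces to producing, for each partial isomorphism $\phi$ in $\mc R$, a pair $(u,v)\in\mbf S$ with $u\star v$ not identically zero; Proposition~\ref{prop:algebraic-fiber-product}(iii) together with the $\bds L^{\infty}$-action then sweep out every element of $\C\mc R^{e}$.

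Multiplicativity is the step where the cocycle must drop out. One computes $\Phi((u*v)(u'*v'))=(u'u)\star(vv')$ with $u'u$ and $vv'$ given by the twisted convolution in $\C\mc R_{\sigma}$. Evaluating at $((a,\phi\phi'(a)),(\phi\phi'(a),a))$ produces the two scalar factors $\sigma(a,\phi'(a),\phi\phi'(a))$ and $\sigma(\phi\phi'(a),\phi'(a),a)$, whose product equals $1$ by the skew-symmetry normalization $\sigma(x,y,z)\sigma(z,y,x)=1$; the remaining factor matches exactly the untwisted convolution product $(u\star v)\cdot(u'\star v')$ in $\C\mc R^{e}$. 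That $\Phi$ restricts to the identity on $\bds L^{\infty}$ is then immediate from Proposition~\ref{prop:algebraic-fiber-product}(iii), since $\Phi(\hat x)=x\star\mbf 1$ is supported on the unit diagonal of $\mc R^{e}$ with value $x$.

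The main technical obstacle is the surjectivity step. For $u$ supported on $\mathrm{Gr}(\phi)$, unitarity forces the local sign $\sigma(x,\phi(x),x)\in\{\pm 1\}$ (a sign by skew-symmetry) to equal $+1$, which may fail on part of $\mathrm{dom}(\phi)$. One must refine $\phi$ into pieces where this sign is $+1$ and use the normalization $\sigma(x,x,x)=1$ together with standard cocycle manipulations to guarantee that enough normalizing unitary pairs exist to reach every partial isomorphism in $\mc R$.
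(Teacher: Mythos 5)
Your overall skeleton is sound and the first half (injectivity via Proposition \ref{prop:algebraic-fiber-product}, identification of the image with the $\bds L^{\infty}$-invariants $\C[\mc R^{(2)}]^{\bds L^{\infty}}=\C[\mc R^{e}]$ via Propositions \ref{prop:star-vs-tensor} and \ref{prop:multi-modules-fulness}) is exactly the paper's argument. Where you diverge is in how the cocycle is made to disappear. You do it by brute force: compute $(u*v)(u'*v')=(u'u)*(vv')$ in the twisted algebra and check that the two $\sigma$-factors coming from the two tensor legs cancel by the relation $\sigma(x,y,z)\sigma(z,y,x)=1$. The paper instead packages this structurally: it first observes that the left and right $\bds L^{\infty}$-module actions on $\C[\mc R]$ do not see $\sigma$, so that $\mbf U_{\C\mc R_{\sigma}/\bds L^{\infty}}=\mbf U_{\C\mc R/\bds L^{\infty}}$ and hence $\C\mc R_{\sigma}*_{\bds L^{\infty}}\C\mc R_{\sigma}=\C\mc R*_{\bds L^{\infty}}\C\mc R$ as operator algebras on $\C[\mc R^{(2)}]$ (the cocycle independence is thus established before any image is computed), and then gets surjectivity and multiplicativity in one stroke from the groupoid isomorphism $\theta:\mc R\to\mc R^{e}$, $\theta(x,y)=((y,x),(x,y))$, via the identity $(u*v)(\mbf 1\otimes\mbf 1)=\C\theta(\alpha u_{\phi}^{*}\beta^{*})$ for $u=\alpha u_{\phi}$, $v^{*}=\beta u_{\phi}$. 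Your computation is a legitimate and in some ways more transparent substitute, since it makes visible exactly which symmetry of $\sigma$ is being used; the paper's route buys a cleaner surjectivity statement (the image is the image of the algebra isomorphism $\C\theta$ applied to the span of $\alpha u_{\phi}^{*}\beta^{*}$, which is all of $\C\mc R$) and reuses the Feldman--Moore description of the normalizer.

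The one genuine gap is the issue you flag at the end, and you should close it rather than leave it as an ``obstacle'': whether $u_{\phi}$ is actually unitary in $\C\mc R_{\sigma}$, i.e.\ whether the sign $\sigma(x,\phi(x),x)$ equals $+1$. This is not something to be fixed by refining $\phi$ into pieces (the sign is a pointwise obstruction, not removable by cutting the domain); it is resolved by using the full Feldman--Moore normalization. A skew-symmetric cocycle in their sense transforms by $\op{sgn}(\pi)$ under the whole permutation group of the three arguments and satisfies $\sigma(x,x,y)=\sigma(x,y,y)=1$ (these degeneracies already follow from the cocycle identity and $\sigma(x,x,x)=1$); combining $\sigma(x,y,x)=\sigma(y,x,x)^{-1}$ with $\sigma(y,x,x)=1$ gives $\sigma(x,y,x)=1$ identically, so every $\alpha u_{\phi}$ with $\alpha\in \mbf U_{\bds L^{\infty}}$ and $\phi$ in the full group is unitary in $\C\mc R_{\sigma}$, and conversely every normalizing unitary has this form. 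With that in hand your sweeping argument (decompose $\mc R^{e}$ into graphs by Lusin's theorem, extend partial isomorphisms to elements of the full group, and cut by the $\bds L^{\infty}$-action of Proposition \ref{prop:algebraic-fiber-product}(iii)) does give surjectivity, and your cancellation computation gives multiplicativity; the proof is then complete.
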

\begin{proof}
By Proposition \ref{prop:multi-modules-fulness} one has $\C\mc R^{e}=\C[\mc R^{(2)}]^{\bds L^{\infty}}$, and it follows from (ii) in Proposition \ref{prop:algebraic-fiber-product} that $T\mapsto T(\mbf 1\otimes\mbf 1)$ is injective and take values in $\C\mc R^{e}$. Hence it suffices to prove that it is a surjective morphism of algebras. Since $\sigma$ is supposed to be skew-symmetric, the right and left actions of $\bds L^{\infty}$ on $\C[\mc R]$ do not depend on $\sigma$. This implies that $\mbf U_{\C\mc R_{\sigma}/\bds L^{\infty}}=\mbf U_{\C\mc R/\bds L^{\infty}}$ and proves that $\C\mc R_{\sigma}*_{\bds L^{\infty}}\C\mc R_{\sigma}=\C\mc R*_{\bds L^{\infty}}\C\mc R$. For every $u\in \mbf U_{\C\mc R}/\bds L^{\infty}$ there exists $\phi\in \op{Full}\mc R$ such that $u^{*}au=a\circ\phi$ for every $a\in \bds L^{\infty}$ and one can easily check that there exists an unitary function $\alpha\in \bds L^{\infty}$ such that $u=\alpha u_{\phi}$ where $u_{\phi}(x,y)=\mbf 1(x,\phi y)$. If $v$ is another unitary of $\C\mc R$ such that $(u,v)\in \mbf U_{\C\mc R/\bds L^{\infty}}$, then $v^{*}=\beta u_{\phi}$ for some unitary function $\beta\in \mbf L^{\infty}$. The groupoid isomorphism $\theta:\mc R\to \mc R^{e}$ given by $\theta(x,y) =(y,x,y)$ induces by proposition \ref{prop:C-algebra-functor} an algebra isomorphism $\C\theta:\C\mc R\to\C\mc R^{e}$ which fixes the elements of $\bds L^{\infty}$ and such that $\C\theta(u)=u^{*}\otimes u$ for every unitary $u\in \mbf U_{\C\mc R}$. Hence
\begin{multline}
(u*v)(\mbf 1\otimes \mbf 1)
	=u\otimes  v 
	=\alpha u_{\phi}\otimes u_{\phi}^{*}\beta^{*} 
	=\alpha(\C\theta(u^{*}_{\phi}))\beta^{*}= \C\theta(\alpha u^{*}_{\phi}\beta^{*})
\end{multline} 
This completes the proof.
\end{proof}

Let $A$ be a tracial von Neumann algebra together with a Cartan subalgebra $B\subset A$. Such pairs has been classified by Feldman and Moore in \cite{MR0578730}. More precisely, they prove the following structure theorem:
\begin{thm}[\cite{MR0578730}]
 For every finite factor $A$ endowed with a Cartan subalgebra, there exists an ergodic standard equivalence relation $\mc R$ (unique up to conjugacy) and a cocycle $\sigma\in Z^{2}(\mc R:\mbf T)$ (unique up to cohomology) such that the following sequence of extensions are isomorphic
\begin{equation}\label{}
A/\mc A/B\simeq N\mc R_{\sigma}/\C\mc R_{\sigma}/\bds L^{\infty}
\end{equation}
where $\mc A$ denotes the normalizing algebra of $A/B$.
\end{thm}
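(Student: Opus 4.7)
My plan is to follow the classical argument of Feldman-Moore, building the relation $\mc R$ and the cocycle $\sigma$ directly from the normalizer of $B$ in $A$. First I would fix an isomorphism $B\simeq L^{\infty}X$ and, for each normalizer $u\in \mbf U_{A/B}$, use the automorphism $b\mapsto u^{*}bu$ of $B$ to produce a measurable transformation $\phi_{u}\in \Aut(X)$ characterized by $u^{*}bu=b\circ\phi_{u}$. Since $A$ has separable predual, the group $\mbf U_{A/B}$ contains a countable weakly dense subset; combined with the fact that any element of $\mc N_{A/B}$ can be approximated by finite combinations of normalizers whose $\phi_{u}$'s are pairwise supported on pieces of $X$ (using polar decomposition together with the Cartan property and Lusin-type selection arguments), one produces a countable family of partial isomorphisms whose graphs generate the desired standard equivalence relation
\begin{equation*}
\mc R = \set{(x,y)\in X\times X|\exists u\in \mbf U_{A/B},\;y=\phi_{u}(x)}.
\end{equation*}
Ergodicity of $\mc R$ follows from the factoriality of $A$: a $\mc R$-invariant set would give a central projection in $A$.

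Next I would construct the cocycle. If $u,v\in \mbf U_{A/B}$ satisfy $\phi_{u}=\phi_{v}$ on some subset, then $uv^{*}$ normalizes $B$ trivially, so lies in $B$ (maximality of $B$ as a masa), giving a unitary element of $\bds L^{\infty}$. Writing each normalizer uniquely as $\alpha u_{\phi}$ with $\alpha$ a unitary of $\bds L^{\infty}$ and $\phi$ a partial transformation of $\mc R$, the obstruction to multiplicativity $u_{\phi}u_{\psi}=\sigma(\phi,\psi)u_{\phi\psi}$ produces a measurable function $\sigma:\mc R^{(2)}\to \mbf T$; associativity of multiplication in $A$ forces the $2$-cocycle identity, and a different choice of lifts $u_{\phi}$ changes $\sigma$ by a coboundary, yielding a well-defined class in $H^{2}(\mc R:\mbf T)$. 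This gives a $*$-isomorphism $\C\mc R_{\sigma}\to \mc N_{A/B}$ sending a finitely supported section $f$ of $\mc R$ to the corresponding (finite) sum of normalizers, preserving the trace by construction. Passing to the GNS completion, this extends to an isomorphism $N\mc R_{\sigma}\simeq A$ carrying $\C\mc R_{\sigma}$ onto $\mc N_{A/B}=\mc A$ and $\bds L^{\infty}$ onto $B$, establishing the isomorphism of triple extensions.

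For uniqueness, if $(\mc R',\sigma')$ is another such pair, the induced identification of $L^{\infty}X$-modular decompositions of $\mc N_{A/B}$ yields a measure-preserving bijection between the orbit structures, hence a conjugacy $\mc R\simeq \mc R'$; the associated change in trivializations of the normalizer then conjugates $\sigma$ to $\sigma'$ up to coboundary. The main technical obstacle is verifying that $\mc R$ is genuinely a \emph{standard} Borel equivalence relation (countably separated with a standard Borel graph), which is where one needs separability of the predual to extract a countable generating set of partial automorphisms whose graphs cover $\mc R$ and then apply Lusin's selection theorem to realize $\mc R$ as a countable disjoint union of Borel partial isomorphisms; this is essentially the content of \cite{MR0578730}, so in the paper I would simply invoke their theorem.
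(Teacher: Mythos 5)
The paper offers no proof of this statement: it is quoted verbatim as the Feldman--Moore structure theorem and attributed entirely to \cite{MR0578730}, exactly as you conclude you would do. Your sketch of the underlying construction (orbit relation from the normalizer via the point realizations $\phi_{u}$, ergodicity from factoriality, the masa argument showing $uv^{*}\in B$ when $\phi_{u}=\phi_{v}$, and the $2$-cocycle as the obstruction to multiplicative lifts) is a faithful outline of the classical argument, so your proposal matches the paper's approach.
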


By theorem \ref{thm:square-fiber-twisted-algebra} there exist an algebra isomorphism $A*_{B}A= \mc A*_{B}\mc A\simeq \C\mc R$ which maps $B$ onto $\bds L^{\infty}$. Since $\mc R=\mc R^{e}$ it follows from lemma \ref{lem:iso:classifying-cyclic-isomorphism} that $\mbf E_{n}(\mc A/B)\simeq C_{n}(\mbf E\mc R)$, which is a $\C\mc R$-projective  resolution of $\bds L^{\infty}$. This proves 
\begin{equation}\label{}
H^{(2)}_{\bullet}(\mc A/B)=\op{Tor}_{\bullet}^{\C\mc R}(B,N\mc R)=H_{\bullet}^{(2)}(\mc R)
\end{equation}
which implies theorem \ref{thm:residual-L2-Betti-numbers}.

\bibliographystyle{alpha}
\bibliography{../mybib}

\end{document}